\newtheorem{theorem}{Theorem}
\newtheorem{assumption}{Assumption}
\newtheorem{proposition}{Proposition}
\newtheorem{lemma}{Lemma}
\newtheorem{corollary}{Corollary}
\def\tsc#1{\csdef{#1}{\textsc{\lowercase{#1}}\xspace}}
\newcommand*{\rttensortwo}[1]{\bar{\bar{#1}}}
\newcommand{\ep}{\epsilon}
\newcommand{\halfspace}{\hspace{0.2mm}}
\begin{document}
\let\WriteBookmarks\relax
\def\floatpagepagefraction{1}
\def\textpagefraction{.001}

\title [mode = title]{Heavy Traffic Diffusion Limit for a Closed Queueing Network with Single-Server and Infinite-Server Stations}

\author[1]{{Amir A. Alwan}}

\author[2]{{Bar\i\c{s} Ata}}

\affiliation[1]{organization={Lubar College of Business, University of Wisconsin--Milwaukee}
                }

\affiliation[2]{organization={Booth School of Business, The University of Chicago}
                }


\begin{abstract}
This paper studies the limiting behavior of a closed queueing network with multiple single-server and infinite-server stations. 
Under a heavy traffic asymptotic regime---where the number of jobs and single-server service rates grow large while infinite-server rates remain fixed---we prove a 
weak convergence result for the queue length and idleness process vector, providing an approximation for the original system.\\\vspace{-0.5em}

\noindent\textit{Keywords:} stochastic process limits; queueing theory; heavy traffic analysis; closed queueing network; infinite-server queues\vspace{-.5em}
\end{abstract}


\maketitle

\section{Introduction}\label{sec:Introduction}

This paper studies the limiting behavior of a closed queueing network consisting of multiple single-server and infinite-server stations under a heavy traffic asymptotic regime. Jobs circulate perpetually through the system according to a two-level probabilistic routing structure: from single-server stations to infinite-server stations, and from infinite-server stations back to the single-server stations.

The asymptotic regime is one in which both the number of jobs in the system and the service rates at the single-server stations grow large, while the service rates at the infinite-server stations remain fixed. In this regime, we further assume that the system is in heavy traffic, meaning that each single-server station is critically loaded: the nominal arrival rate to each station matches its service capacity. This reflects a setting in which congestion arises at the single-server stations, while the infinite-server stations act as delay buffers. 

Our main result is a functional limit theorem for the diffusion-scaled queue length and idleness process vector, where the limit process solves a Skorokhod problem. To establish this, we prove existence, uniqueness, and continuity of the associated nonlinear regulator mapping---results which are of interest in their own right. 
Unlike several earlier works on the weak convergence of closed networks with infinite-server stations, which have employed martingale techniques, we use a continuous mapping argument. 

The model was motivated in part by ride-hailing systems such as Uber and Lyft. In this context, the single-server stations represent regions of a city where drivers await ride requests, while the infinite-server stations capture travel times between regions. The service rates at the single-server stations represent customer arrival rates in each region, while the service rates at the infinite-server stations represent the travel time completion rates. After picking up a passenger (i.e., after a service completion at a single-server station), a driver enters a travel phase to reach the destination, and upon completion (i.e., after a service completion at an infinite-server station), becomes available again in a potentially new region. After picking up a passenger, a driver enters a travel phase to reach the destination, and upon completion, becomes available again in a potentially new region. The two-level routing structure in our model naturally reflects such origin-destination dynamics, and the presence of multiple infinite-server stations allows for heterogeneity in travel times across different region pairs. 
While ride-hailing serves as a useful motivation, the model and results apply more broadly to systems in which tasks are queued for assignment or dispatch at specific locations, and then processed or delayed in parallel elsewhere. By establishing a rigorous diffusion approximation for such systems, this paper lays the theoretical groundwork for future research on their dynamic control and optimization.

Ride-hailing systems are often modeled as open, with drivers entering and leaving the platform and hence a time-varying fleet size; for example, \citet{Ozkan_Ward_2020} models a ride-hailing system as an open two-sided matching system. Our model is closed and is most appropriate over time horizons in which the fleet size is approximately constant; over longer horizons, the diffusion approximation can be applied in a piecewise fashion by updating the fleet size between intervals. Demand can also be non-stationary, particularly in the presence of dynamic pricing. Although this paper focuses on time-homogeneous primitives, such non-stationarity can be captured through time-varying service rates at the single-server stations, and we expect the convergence results to extend to such a time-inhomogeneous setting, although the mathematical details are beyond the scope of this paper. In particular, second-order adjustments to the effective demand rates would appear as drift terms in the limit and naturally give rise to drift-rate control problems; see, e.g., \citet{Alwan_et_al_2024}. Extending the convergence results to open networks with external arrivals and departures, while allowing jobs to recirculate through the network, together with infinite-server travel-time nodes, is left for future work.

\subsection{Literature Review}\label{Section: 1.1}

This paper contributes to the literature on heavy traffic diffusion approximations for queueing networks, with a particular focus on closed systems involving infinite-server stations. Classical work by \citet{Iglehart_1965}, \citet{Borovkov_1967}, and \citet{Whitt_Glynn_1991} establishes diffusion limits for open queueing networks with infinite-server queues. Other papers have studied the structure and properties of these limiting processes. For example, \citet{Glynn_1982} establishes necessary and sufficient conditions under which the heavy-traffic limit of the $GI/G/\infty$ queue is Markovian. Foundational work on heavy-traffic approximations for stochastic flow networks with bottlenecks includes \citet{Chen_Mandelbaum_1991_Fluid, Chen_Mandelbaum_1991_Diffusion}, which develop both fluid and diffusion limits. Various surveys, such as \citet{Glynn_1990} and \citet{Pang_Talreja_Whitt_2007} discuss a variety of techniques for deriving diffusion approximations in queueing systems; see the references therein for a broader overview.

In the context of closed queueing networks with infinite-server stations, several papers establish diffusion limits under heavy traffic conditions. Notably, \citet{Kogan_Lipster_Smorodinskii_1986}, \citet{Krichagina_1992}, and \citet{Kogan_Lipster_1993} analyze models with many single-server stations and a single infinite-server station. These papers have a single probability routing vector and use martingale methods to prove convergence. Our paper supplements this line of research by allowing an arbitrary number of infinite-server stations and a two-level probabilistic routing structure. (This enables more realistic modeling of origin-destination dynamics and heterogeneous travel times, which are important to service systems such as ride-hailing.) Our analysis also differs methodologically: we use a continuous mapping approach and establish existence, uniqueness, and continuity of a nonlinear regulator mapping---results that are not readily implied by existing work. 
To the best of our knowledge, this is the first paper to rigorously establish a diffusion limit for a closed queueing network with both multiple infinite-server stations and a two-level routing structure. 

We note that the network studied here is a closed queueing network with Markovian routing and service times (i.e., a Gordon--Newell network) with a fixed number of jobs $n$, and therefore admits a product-form stationary distribution. Using this stationary distribution, however, requires computing the normalizing constant, i.e., the partition function $G(n)$, which is a sum over all ways of allocating the $n$ jobs across stations; the number of such allocations grows combinatorially with the number of jobs and stations. This quickly becomes computationally difficult to evaluate in large networks. While this challenge has motivated work on asymptotic and algorithmic methods for approximating $G(n)$ in large closed networks, exact computation can still be burdensome when both the number of jobs and the number of stations are large; see, e.g., \citet{Birman_Kogan_1992} and \citet{Kogan_1992}. In contrast, the diffusion limit developed here provides a tractable process-level approximation for $\sqrt{n}$-scale fluctuations, capturing transient behavior and serving as a natural basis for diffusion-scale control and optimization in heavy traffic.

A closely related and important antecedent of our model is the one developed in \citet{Alwan_et_al_2024}, which considers a ride-hailing system modeled as a closed queueing network with multiple single-server regions and a single infinite-server travel node. The objective is to maximize system profit by making dynamic pricing and dispatch control decisions. That paper formulates an approximating Brownian control problem (BCP) using formal limiting arguments in the spirit of \citet{Harrison_1988, Harrison_2003}. Under a complete resource pooling assumption, the BCP is reduced to a one-dimensional equivalent workload formulation, which is then solved analytically. However, the assumption of a single infinite-server node effectively enforces homogeneous travel times and origin-independent destination routing. This modeling simplification was necessary to permit a one-dimensional state space collapse, but such a reduction is not possible when the model includes multiple infinite-server nodes.

In contrast, our model accommodates multiple infinite-server stations and a two-level routing structure that allows for heterogeneous travel times and captures general origin-destination routing distributions. (Notably, \citet{Braverman_et_al_2019} considers a related model with empty-car routing control but studies it under fluid scaling, which avoids the complexities associated with diffusion-scaled fluctuations.) Our setting leads to a multidimensional diffusion limit. 
By rigorously proving a weak convergence result in this setting, our work provides the theoretical foundation for studying more complex control policies and generalizes the modeling framework introduced in \citet{Alwan_et_al_2024}. 

As mentioned earlier, the presence of multiple infinite-server stations does not permit a one-dimensional state space collapse, implying that the limiting diffusion control problem remains high-dimensional. However, recent advancements in computing power have renewed interest in studying high-dimensional diffusion control problems using approximate dynamic programming and neural network-based methods. One expects the dynamic control problem associated with the system here to involve drift and singular control of (reflecting) Brownian motion in the heavy-traffic limit. Such problems have been successfully addressed in dimensions up to 100 or more in recent work by \citet{Ata_Harrison_Si_2024, Ata_Harrison_Si_2025}, and \citet{Ata_Xu_2025}. While those papers consider different applications, we expect their computational methods to be applicable to ride-hailing applications as well. Although the study of dynamic control of ride-hailing systems in high dimensions is left for future work, our paper lays the mathematical groundwork for future research in stochastic control of such systems, including those where dimensionality precludes analytical tractability.

Beyond ride-hailing, our results are also relevant to other applications modeled using queueing networks with infinite-server stations. For example, \citet{Ata_Tongarlak_Lee_Field_2024} studies nonprofit volunteer management using a queueing network with a single-server station and multiple infinite-server stations. They formulate an approximating Brownian control problem but lack rigorous justifications for their approximations. Our results help close this gap by providing provable limit theorems in a similar setting.

The rest of the paper is organized as follows. Section \ref{Section:2} introduces the model primitives and makes a sample-path construction of the queue-length processes describing the network. Section \ref{Section:3} articulates the heavy traffic assumption and the asymptotic regime, and states the main result of the paper (Theorem \ref{Theorem:Main_Result}). Section \ref{Section:4} develops the key tools needed to prove the main result. Section \ref{Section:5} is devoted to a proof of the main result, which involves proving weak convergence of the fluid-scaled processes. Relevant notation and technical preliminaries are given shortly. Proofs deferred from the main text are provided in Appendices \ref{appendix:C}--\ref{appendix:D}.\vspace{0.5em}

\textbf{Notation and Technical Preliminaries.} The set of positive integers is denoted by $\mathbb{N}:=\{1,2,3,\dots\}$, and we define $[k]\coloneqq \{1,2,\dots, k\}$ for $k\in\mathbb{N}$. For $a,b\in\mathbb{R}$, we let $a\vee b:= \max\{a,b\}$ and $a\wedge b := \min\{a,b\}$, and let $\lfloor a\rfloor$ denote the largest integer less than or equal to $a$. For $l\in [k]$, the $l$th unit basis vector in $\mathbb{R}^k$ is denoted by $e_l$, which has one in the $l$th component and zeros elsewhere. Moreover, for $l\in [k]$, the $l$th projection map $\pi_l:\mathbb{R}^k\rightarrow \mathbb{R}$ is given by $\pi_l(x) = x_l$, where $x_l$ is the $l$th component of $x\in \mathbb{R}^k$. For $k\in\mathbb{N}$, the positive orthant in $\mathbb{R}^k$ is denoted by $\mathbb{R}^k_+:=\{x\in\mathbb{R}_k:x_l\ge 0\text{ for all }l\in[k]\}$. For a function $f:X\rightarrow Y$ and a subset $S\subseteq X$, we denote by $f\vert_S$ the restriction of $f$ to $S$. The indicator function of a subset $S\subseteq X$ is denoted by $\mathds{1}_S$.

For $k\in\mathbb{N}$, we denote by $D^k\equiv D\left([0,\infty),\mathbb{R}^k\right)$ the set of all functions $x:[0,\infty)\rightarrow\mathbb{R}^k$ that are right continuous on $[0,\infty)$ and have left limits on $(0,\infty)$. We denote by $C^k\equiv C\left([0,\infty),\mathbb{R}^k\right)$ the set of all functions $x:[0,\infty)\rightarrow\mathbb{R}^k$ that are continuous. The identically zero function is denoted by $\mathbf{0}$. Similarly, for $k\in\mathbb{N}$ and $T>0$, we denote by $D_T^k\equiv D\left([0,T], \mathbb{R}^k\right)$ the set of all functions $x:[0,T]\rightarrow \mathbb{R}^k$ that are right continuous on $[0,T)$ and have left limits on $(0,T]$. When the space $D_T^k$ is endowed with the norm
\begin{align*}
\qquad\|x\|_{T,k}\coloneqq\,  \max_{l\in [k]}\sup_{t\in [0,T]}\vert x_l(t)\vert,
\end{align*}
it is a Banach space. 
When $k=1$, we write $D^1 = D$, $D_T^1 = D_T$, and $\|\cdot \|_{T,1} = \|\cdot \|_T$. The one-sided reflection map on $D$ is the pair of functions $(\psi, \phi):D\rightarrow D^2$ defined as follows:
\begin{alignat}{2}
\psi(x)(t)&\coloneqq \sup_{s\in [0,t]}[-x(s)]^+,&&\quad t\ge 0,\label{eq:D.2}\\
\phi(x)(t)&\coloneqq  x(t) \,+\, \psi(x)(t),&&\quad t\ge 0.\label{eq:D.3}
\end{alignat}
The space $D^k$ is a topological space when endowed with the Skorokhod $J_1$ topology. All random variables in this paper are defined on a common probability space $(\Omega, \mathcal{F}, P)$. We denote by $\mathcal{M}^k$ the Borel $\sigma$-algebra on $D^k$ induced by the Skorokhod $J_1$ topology. All stochastic processes in this paper are measurable functions from $(\Omega, \mathcal{F}, P)$ to $(D^k,\mathcal{M}^k)$ for some appropriate dimension $k$. For a sequence of stochastic processes $\{\xi^n\}_{n=1}^{\infty}$ and a stochastic process $\xi$, all with sample paths in $D^k$ almost surely, we write $\xi^n\Rightarrow \xi$ as $n\rightarrow\infty$ to mean that the sequence of probability measures on $(D^k,\mathcal{M}^k)$ induced by the processes $\xi^n$ converge weakly to the probability measure on $(D^k,\mathcal{M}^k)$ induced by the stochastic process $\xi$; see \citet{Billingsley_1999} and \citet{Whitt_2002} for further details.

\section{Queueing Network Model}\label{Section:2}
This section describes the closed queueing network model. In Section \ref{Section:2.1}, we introduce the system primitives. Then, in Section \ref{Section:2.2}, we describe the system performance measures and their dynamics.

\subsection{Model Primitives}\label{Section:2.1}
We consider a closed queueing network consisting of a fixed number of jobs circulating among $J$ single-server stations and $K$ infinite-server stations. Each single-server station $j\in [J]$ has a buffer where jobs of that class are stored and a single server with service rate of $\mu_j>0$. Each infinite-server station $k\in [K]$ has an unlimited number of parallel servers, each with a service rate of $\eta_k>0$. For future analysis, it is useful to define $\eta\coloneqq  \max_{k\in [K]}\eta_k$.

Jobs move through the system according to a two-level probabilistic routing structure: from single-server stations to infinite-server stations, and from infinite-server stations back to single-server stations. When a job completes service at single-server station $j\in [J]$, it is routed to infinite-server station $k\in [K]$ with probability $p_{jk}\in [0,1]$. Similarly, when a job completes service at infinite-server station $k\in [K]$, it is routed to single-server station $j\in [J]$ with probability $q_{kj}\in [0,1]$. To formalize this, we take as given stochastic matrices 
\begin{align}
\!\!\!\!P\,=\,(p_{jk})\,\in\, \mathbb{R}^{J\times K}\quad \text{and}\quad Q\,=\,(q_{kj})\,\in\,\mathbb{R}^{K\times J},\label{Equation:Stochastic_Matrix}
\end{align}
representing routing from single-server stations to infinite-server stations and from infinite-server stations to single-server stations, respectively.

The stochastic evolution of the system is driven by the following primitives. For $j\in [J]$, let $N_j = \{N_j(t):t\ge 0\}$ be a unit-rate Poisson process governing service completions at single-server station $j$. For $k\in [K]$, let $M_k =\{M_k(t) : t\ge 0\}$ be a unit-rate Poisson process governing service completions at infinite-server station $k$. To model the probabilistic routing of jobs in the network, let $\phi_j = \{\phi_j(l): l\ge 1\}$ for $j\in [J]$ and $\psi_k = \{\psi_k(l): l\ge 1\}$ for $k\in [K]$ denote sequences of i.i.d. random (routing) vectors. Their probability distributions are given by
\begin{align*}
\!\!\!\!P(\phi_j(1) \,=\, e_k) \,=\, p_{jk}\quad \text{and}\quad P(\psi_k(1) \,=\, e_j) \,=\, q_{kj},
\end{align*}
where $e_k$ and $e_j$ are the $k$th and $j$th standard unit basis vector in $\mathbb{R}^K$ and $\mathbb{R}^J$, respectively. We assume that all stochastic primitives are mutually independent. 

To track the cumulative routing of jobs, for $j\in [J]$ and $k\in [K]$, we define the cumulative routing processes $\Phi_{jk} = \{\Phi_{jk}(m):m\ge 1\}$ and $\Psi_{kj} = \{\Psi_{kj}(m): m\ge 1\}$ as follows:
\begin{align}
\Phi_{jk}(m) \coloneqq  \sum_{l=1}^{m}\phi_{jk}(l),\quad \Psi_{kj}(m) \coloneqq  \sum_{l=1}^{m}\psi_{kj}(l),\label{eq:2.6}
\end{align}
where $\phi_{jk}(l)$ and $\psi_{kj}(l)$ are the $k$th and $j$th components of $\phi_{j}(l)$ and $\psi_k(l)$, respectively. That is, $\Phi_{jk}(m)$ represents the total number of jobs that are routed from single-server station $j$ to infinite-server station $k$ among the first $m$ jobs served by server $j$. The interpretation of $\Psi_{kj}(m)$ is similar.

\subsection{State Dynamics}\label{Section:2.2}

For $j\in [J]$ we denote by $Q_j(t)$ the number of jobs in the buffer of the $j$th single-server station at time $t$. Similarly, for $k\in [K]$ we denote by $V_k(t)$ the number of jobs in the $k$th infinite-server station at time $t$. To describe the state dynamics of $Q_j$ and $V_k$, let $Q_j(0)$ for $j\in [J]$ and $V_k(0)$ for $k\in [K]$ be almost surely nonnegative random variables representing the initial distribution of the number of jobs in the network. We assume that these random variables are independent of all other stochastic primitives. Then, for $j\in [J]$ and $k\in [K]$, we have that
\begin{alignat}{3}
Q_j(t) &\,=\, Q_j(0) \,+\, A_j(t) \,-\, D_j(t),&&\,\,\,\,\, t\ge 0,\label{eq:2.8}\\
V_k(t) &\,=\, V_k(0) \,+\, E_k(t) \,-\, F_k(t),&&\,\,\,\,\, t\ge 0,\label{eq:2.9}
\end{alignat}
where $A_j =\{A_j(t): t\ge 0\}$ and $D_j = \{D_j(t): t\ge 0\}$ denote the arrival and departure processes for single-server station $j$. Similarly, $E_k = \{E_k(t): t\ge 0\}$ and $F_k = \{F_k(t):t\ge 0\}$ denote the arrival and departure processes for infinite-server station $k$. 
To be more specific, the arrival and departure processes are defined as follows:
\begin{alignat}{2}
A_j(t) &\coloneqq  \sum_{k=1}^{K}\Psi_{kj}\big(F_k(t)\big),&&\quad t\ge 0,\label{eq:2.10}\\[-1mm]
E_k(t) &\coloneqq  \sum_{j=1}^J \Phi_{jk}\big(D_j(t)\big),&&\quad t\ge 0,\label{eq:2.11}\\[0.5mm]
D_j(t) &\coloneqq  N_j\big(\mu_j T_j(t)\big),&&\quad t\ge 0,\label{eq:2.12}\\[1mm]
F_k(t) &\coloneqq  M_k \Big(\eta_k \!\int_{0}^{t}\!V_k(s)\,ds\Big),&&\quad t\ge 0,\label{eq:2.13}
\end{alignat}
where $\Phi_{jk}$ and $\Psi_{kj}$ are given by (\ref{eq:2.6}), and $T_j = \{T_j(t):t\ge 0\}$
is a process that represents the scheduling policy for the server at the $j$th single-server station. To be more specific, $T_j(t)$ is the cumulative amount of time the server is working up to time $t$ at single-server station $j$. The corresponding idleness process $I_j = \{I_j(t):t\ge 0\}$ for $j\in [J]$ of the server at single-server station $j$ is defined as follows:
\begin{align}
\,\,\,\,\,\,\,\,\,\,\,\,\,\,\,I_j(t)\coloneqq\,  t \,-\, T_j(t),\quad t\ge 0.\label{eq:2.15}
\end{align}
By (\ref{eq:2.8})--(\ref{eq:2.13}), it is straightforward to verify that
\begin{align}
\!\!\!\sum_{j=1}^{J}Q_j(t) \,+ \sum_{k=1}^{K}V_k(t)\,=\sum_{j=1}^{J}Q_j(0) \,+ \sum_{k=1}^{K}V_k(0),\label{eq:2.16}
\end{align}
almost surely for all $t\ge 0$. Throughout our analysis, we restrict to scheduling policies $T$ that satisfy the following conditions almost surely for all $j\in [J]$:
\begin{align}
&\!\!\!\!\!\!\!\!\!\!Q_j(t)\in [0,\infty)\,\text{ for all }\,t\ge 0,\label{eq:2.18}\\[1pt]
&\!\!\!\!\!\!\!\!\!\!\int_{0}^{\infty}\!\mathds{1}_{\{Q_j(t) \,>\,0\}}\,dI_j(t) \,=\, 0,\label{eq:2.17}\\
&\!\!\!\!\!\!\!\!\!\!I_j \text{ is continuous and nondecreasing with } I_j(0)=0,\label{eq:2.19}\\[5pt]
&\!\!\!\!\!\!\!\!\!\!I_j(t) - I_j(s) \,\le\, t-s\,\text{ for all }\,0\le s\le t<\infty.\label{eq:2.20}
\end{align}
Equation (\ref{eq:2.18}) reflects the obvious physical restriction that the servers can only work on jobs when the queues are not empty. Equation (\ref{eq:2.17}) implies that the server idleness at each single-server station does not increase as long as the queue is not empty. In other words, we restrict attention to (otherwise arbitrary) work-conserving scheduling policies. Finally, (\ref{eq:2.19})--(\ref{eq:2.20}) are natural consequences of the interpretation of $I_j$ as server idleness and are standard in the heavy traffic literature. 

To lighten the notation, we henceforth work pathwise on a full-measure set on which the initial conditions, the scheduling-policy constraints, and the primitives' usual path properties hold for all $t\ge 0$; we therefore omit the ``almost surely'' terminology throughout. 

\section{Heavy Traffic Assumption and Main Result}\label{Section:3}

As is standard in heavy traffic asymptotic analysis, we consider a sequence of queueing networks---as described in Section \ref{Section:2}---indexed by the system parameter $n$. We study this sequence of systems in the heavy traffic asymptotic regime as $n\rightarrow\infty$. Throughout, we attach a superscript of $n$ to the various quantities of interest to indicate that they correspond to the $n$th system. 

We consider a regime in which both the number of jobs and the service rates at the single-server stations grow large with the system parameter $n$, and where the system satisfies a critical loading condition. However, we assume that the service rates at the infinite-server stations do not vary with the system parameter $n$. This regime is formalized by the following heavy traffic assumption. To state it, define
\begin{align}
    m_k\coloneqq\,  \eta_k^{-1}\sum_{j=1}^{J}\mu_j p_{jk},\quad k\in [K].
\end{align}
\begin{assumption}[Heavy Traffic Assumption]\label{Assumption:Heavy_Traffic}
	The service rates at the single-server stations and infinite-server stations, respectively, vary with $n$ as follows: $\mu_j^n =  n\mu_j$ for $j\in [J]$ and $\eta^n_k=  \eta_k$ for $k\in [K]$. Moreover, the following critical loading conditions hold:
	\begin{align}
	&\sum_{k=1}^{K}\sum_{i=1}^{J}\mu_ip_{ik}q_{kj} \,=\, \mu_j\quad \text{for all}\quad j\in [J],\label{HT1}\\
	&\sum_{k=1}^{K}m_k\,=\,1.\label{HT2}
	\end{align}
\end{assumption}

Roughly speaking, condition (\ref{HT1}) assumes that every single-server is fully utilized, whereas (\ref{HT2}) roughly says that almost all jobs are in the infinite-server stations; see \citet{Alwan_et_al_2024} and \citet{Ata_Tongarlak_Lee_Field_2024} for similar assumptions in ride-hailing and volunteer engagement applications, respectively. For simplicity, Assumption \ref{Assumption:Heavy_Traffic} focuses on the zero-drift case. However, this assumption can be refined to give a nonzero drift term in the heavy-traffic limit. 
In particular, suppose that the service rates at the single-server stations in the sequence satisfy
$$\sqrt{n}\,\Big[\sum_{k=1}^{K}\sum_{i=1}^{J}\mu_ip_{ik}q_{kj}\,-\, n^{-1}\mu_j^n\Big]\rightarrow c_j\in\mathbb{R} \quad\text{as}\quad n\rightarrow\infty,$$ for all $j\in [J]$. Under this condition, our main result, i.e., Theorem \ref{Theorem:Main_Result}, remains valid, except that the limiting diffusion for the single-server queue-length process acquires an additional drift term $c_jt$.

To shed light on (\ref{HT1}), note that $\sum_{i=1}^{J}n\mu_i p_{ik}$ represents the total rate of jobs leaving the single-server stations to infinite-server station $k$. Thus, $\sum_{k=1}^{K}q_{kj}\sum_{i=1}^{J}n\mu_i p_{ik}$ represents the total rate of jobs entering single-server station $j$ from the infinite-server stations. Because $n\mu_j$ is the service rate of the server in single-server station $j$, (\ref{HT1}) states that the rate of jobs entering the single-server stations are balanced out by the service rates at the stations.

To shed light on (\ref{HT2}), note that $n\sum_{j=1}^{J}\mu_j p_{jk}$ represents the arrival rate to the $k$th infinite-server station, whereas its service rate is $\eta_k$. Based on intuition from the classical $M/M/\infty$ queue, we expect the steady-state average queue length at the $k$th infinite-server to be $n\sum_{j=1}^{J}\mu_jp_{jk}/\eta_k$. It follows that the expected fraction of jobs at the $k$th infinite-server is $m_k$. Therefore, (\ref{HT2}) states that almost all jobs are at the infinite-server stations. This is consistent with the first condition: It is well known that in large balanced-flow systems in heavy traffic, the queue-length processes are of second-order relative to the system size. Thus, the total number of jobs in the single-server stations are of order $\sqrt{n}$, which implies that the total number of jobs at the infinite-server stations are of order $n$, since the network is closed.

To facilitate the analysis to follow, we next define the following diffusion- and fluid-scaled processes:\vspace{1em}

\noindent\textbf{Diffusion-Scaled Processes:} For $j\in [J]$ and $k\in [K]$, we define the following diffusion-scaled processes:
\begin{alignat}{2}
\!\!\!\hat{Q}_j^n(t) &\coloneqq\,  n^{-1/2}Q_j^n(t),&&\quad t\ge 0,\label{eq:3.1}\!\\
\!\!\!\hat{V}_k^n(t) &\coloneqq\,  n^{-1/2}\big(V_k^n(t) \,-\, nm_k\big),&&\quad t\ge 0,\label{eq:3.2}\!\\
\!\!\!\hat{I}^n_j(t) &\coloneqq\,  \sqrt{n} I^n_j(t),&&\quad t\ge 0,\label{eq:3.3}\!\\
\!\!\!\hat{T}^n_j(t)&\coloneqq\, \sqrt{n} T^n_j(t),&&\quad t\ge 0,\label{eq:3.4}\!\\
\!\!\!\hat{\Phi}^n_{jk}(t) &\coloneqq\, n^{-1/2}\big(\Phi_{jk}(\lfloor nt\rfloor)\,-\,p_{jk}nt\big),&&\quad t\ge 0,\label{eq:3.5}\!\\
\!\!\!\hat{\Psi}_{kj}^n(t) &\coloneqq\,  n^{-1/2}\big(\Psi_{kj}(\lfloor nt\rfloor) \,-\, q_{kj}nt\big),&&\quad t\ge 0,\label{eq:3.6}\!\\
\!\!\!\hat{N}^n_j(t)&\coloneqq\,  n^{-1/2}\big(N_j(nt)\,-\,nt\big),&&\quad t\ge 0,\label{eq:3.7}\!\\
\!\!\!\hat{M}^n_k(t)&\coloneqq\,  n^{-1/2}\big(M_k(nt)\,-\, nt\big),&&\quad t\ge 0.\label{eq:3.8}\!
\end{alignat}
\noindent\textbf{Fluid-Scaled Processes:} For $j\in [J]$ and $k\in [K]$, we define the following fluid-scaled processes:
\begin{alignat}{2}
\bar{Q}_j^n(t) &\coloneqq\,  n^{-1}Q_j^n(t),&&\quad t\ge 0,\label{eq:3.9}\\
\bar{V}_k^n(t) &\coloneqq\,  n^{-1/2}\hat{V}_k^n(t)&&\quad t\ge 0,\label{eq:3.10}\\
\rttensortwo{V}_k^n(t)&\coloneqq\,  n^{-1}V_k^n(t),&&\quad t\ge 0,\label{eq:3.11}\\
\bar{N}^n_j(t)&\coloneqq\,  n^{-1}N_j(nt),&&\quad t\ge 0\label{eq:3.14}\\
\bar{M}^n_k(t)&\coloneqq\,  n^{-1}M_k(nt),&&\quad t\ge 0.\label{eq:3.15}
\end{alignat}
By (\ref{eq:2.8})--(\ref{eq:2.9}), (\ref{eq:3.1})--(\ref{eq:3.15}), and Assumption \ref{Assumption:Heavy_Traffic}, it is straightforward verify that for all $j\in [J]$, $k\in [K]$, and $t\ge 0$ the following equalities hold:
\begin{align}
\!\!\!\!\!\!\!\!\hat{Q}_j^n(t) \,&=\, \hat{\xi}_j^n(t)\,+ \sum_{k=1}^{K}q_{kj}\eta_k\!\int_{0}^{t}\!\hat{V}_k^n(s)\,ds \,+\, \mu_j\hat{I}_j^n(t),\label{eq:3.20}\!\!\\
\!\!\!\!\!\!\!\!\hat{V}_k^n(t) \,&=\, \hat{\zeta}_k^n(t)\,-\,\eta_k \!\int_{0}^{t}\!\hat{V}_k^n(s)\,ds \,- \sum_{j=1}^{J}p_{jk}\mu_j\hat{I}_j^n(t),\label{eq:3.21}\!\!
\end{align}
where
\begin{align}
\!\!\!\!\!\!\!\!\!\!\!\!\!\!\!\hat{\xi}_j^n(t)&\coloneqq\, \hat{Q}_j^n(0)  \,+ \sum_{k=1}^{K}\hat{\Psi}^n_{kj}\Big(\bar{M}^n_k\big(\eta_k\!\int_{0}^{t}\!\rttensortwo{V}_k^n(s)\,ds\big)\Big) \notag\\&\qquad\!-\, \hat{N}^n_j\big(\mu_j T_j^n(t)\big)\,+ \sum_{k=1}^{K}q_{kj}\hat{M}^n_k\Big(\eta_k\!\int_{0}^{t}\!\rttensortwo{V}_k^n(s)\,ds\Big),\label{eq:3.18}\\
\!\!\!\!\!\!\!\!\!\!\!\!\!\!\!\hat{\zeta}_k^n(t)&\coloneqq\, \hat{V}_k^n(0) \,+ \sum_{j=1}^{J}\hat{\Phi}^n_{jk}\big(\bar{N}^n_j(\mu_j T_j^n(t))\big) \notag\\&\qquad\!-\, \hat{M}^n_k \Big(\eta_k\!\int_{0}^{t}\!\rttensortwo{V}_k^n(s)\,ds\Big) \,+ \sum_{j=1}^{J}p_{jk}\hat{N}^n_j(\mu_j T_j^n(t)).\label{eq:3.19}
\end{align}

Moreover, by (\ref{eq:3.9})--(\ref{eq:3.10}) and (\ref{eq:3.20})--(\ref{eq:3.21}), it is straightforward to verify that for all $j\in [J]$, $k\in [K]$, and $t\ge 0$ the following equalities hold:
\begin{align}
\!\!\!\!\!\!\!\!\!\bar{Q}_j^n(t) \,&=\, \bar{\xi}_j^n(t)\,+ \sum_{k=1}^{K}q_{kj}\eta_k\!\int_{0}^{t}\!\bar{V}_k^n(s)\,ds \,+\, \mu_jI_j^n(t),\label{eq:3.22}\!\!\\
\!\!\!\!\!\!\!\!\!\bar{V}_k^n(t) \,&=\, \bar{\zeta}_k^n(t)\,-\,\eta_k\! \int_{0}^{t}\!\bar{V}_k^n(s)\,ds \,- \sum_{j=1}^{J}p_{jk}\mu_jI_j^n(t),\label{eq:3.23}\!\!
\end{align}
where 
\begin{align}
\!\!\!\!\!\bar{\xi}_j^n(t)\coloneqq\,  n^{-1/2}\hat{\xi}_j^n(t)\quad\text{and}\quad
\bar{\zeta}_k^n(t)\coloneqq\,  n^{-1/2}\hat{\zeta}_k^n(t).\label{eq:Fluid_Free_Processes}
\end{align}

We make the following regularity assumption on the initial conditions:

\begin{assumption}[Joint Convergence of the Initial Conditions]\label{Assumption_Initial_Conditions}
	As $n\rightarrow\infty$, $(\hat{Q}^n(0),\hat{V}^n(0))\Rightarrow (Q(0),V(0))$.
\end{assumption}

To facilitate the statement of our main result, let $(\xi^*,\zeta^*)$ be a $(J+K)$-dimensional Brownian motion with initial state 
$(Q(0), V(0))$ and covariance matrix $\Sigma\in\mathbb{R}^{(J+K)\times (J+K)}$, where $\Sigma$ is given as follows: For $i,j\in [J]$ and $k,l\in [K]$ such that $i\ne j$ and $k\ne l$, we have
\begin{alignat}{2}
&\Sigma_{j,j}&&\,=\,\,\,\,\, \sum_{k=1}^{K}q_{kj}(1-q_{kj})\eta_km_k\notag\\[-0.75em]&&&\qquad\qquad+\,\mu_j \,+\sum_{k=1}^{K}q_{kj}^2\eta_km_k,\label{eq:3.29*} \\
&\Sigma_{J+k,J+k}&&\,=\,\,\,\,\, \sum_{j=1}^{J}p_{jk}(1-p_{jk})\mu_j\notag\\[-0.75em]&&&\qquad\qquad+\,\eta_k m_k\,+\sum_{j=1}^{J}p_{jk}^2\mu_j,\\
&\Sigma_{i,j} &&\,=\,\,\,\,\, \sum_{k=1}^{K}q_{ki}q_{kj}\eta_km_k, \\
&\Sigma_{j,J+k}&&\,=\,\,\,\,\,-p_{jk}\mu_j\,-\,q_{kj}\eta_km_k,\\
&\Sigma_{J+l,J+k}&&\,=\,\,\,\,\,\sum_{j=1}^{J}p_{jl}p_{jk}\mu_j.\label{eq:3.33*}
\end{alignat}

\begin{theorem}
\label{Theorem:Main_Result}
	As $n\rightarrow\infty$, $(\hat{Q}^n,\hat{I}^n,\hat{V}^n)\Rightarrow (Q^*, I^*, V^*)$,
	where $(Q^*,I^*, V^*)$ is a $(2J+K)$-dimensional process with continuous sample paths in $\mathbb{R}^{2J}_+\times \mathbb{R}^K$ that satisfies the following equalities for all $j\in [J]$, $k\in [K]$, and $t\ge 0$:
	\begin{align}
	&\!\!\!\!\!\!\!\!\!\!Q_j^*(t) \,=\, \xi^*_j(t) \,+ \sum_{k=1}^{K}q_{kj}\eta_k\!\int_{0}^{t}\!V_k^*(s)\,ds \,+\, \mu_j I_j^*(t),\label{eq:3.27}\\
	&\!\!\!\!\!\!\!\!\!\!V_k^*(t)\,=\, \zeta^*_k(t)\,-\,\eta_k\!\int_{0}^{t}V_k^*(s)\,ds\,-\sum_{j=1}^{J}p_{jk}\mu_j I_j^*(t),\label{eq:3.28}\\
    &\!\!\!\!\!\!\!\!\!\!I_j^*(t)\,=\,\mu_j^{-1}\psi\Big(\xi^*_j\,+\sum_{k=1}^{K}q_{kj}\eta_k\!\int_{0}^{\cdot}\!V_k^*\,ds \Big)(t),\\
	&\!\!\!\!\!\!\!\!\!\!\int_{0}^{\infty}\!\mathds{1}_{\{Q_j^*(t)\,>\,0\}}\,dI_j^*(t)\,=\,0.\label{eq:3.29}
	\end{align}
\end{theorem}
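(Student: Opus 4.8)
The plan is to prove Theorem \ref{Theorem:Main_Result} by a continuous-mapping argument anchored on the integral representation \eqref{eq:3.20}--\eqref{eq:3.21}. The central object is a nonlinear regulator mapping: given a free input $(\xi,\zeta)\in D^{J+K}$, I would show there is a unique triple $(Q,I,V)$ solving the analogues of \eqref{eq:3.27}--\eqref{eq:3.29}, and that the solution map $\Gamma:(\xi,\zeta)\mapsto(Q,I,V)$ is continuous. The crucial simplification is that, with $I$ held fixed, equation \eqref{eq:3.28} for $V_k$ is a linear Volterra equation $V_k(t)=u_k(t)-\eta_k\int_0^t V_k(s)\,ds$ with forcing $u_k=\zeta_k-\sum_{j}p_{jk}\mu_j I_j$; its unique solution is the resolvent $V_k(t)=u_k(t)-\eta_k\int_0^t e^{-\eta_k(t-s)}u_k(s)\,ds$, which depends on $u_k$ linearly and Lipschitz-continuously in $\|\cdot\|_T$ for every $T$. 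Substituting this representation into \eqref{eq:3.27} and using $I_j=\mu_j^{-1}\psi(\,\cdot\,)$ recasts the whole system as a fixed-point equation $I=\mathcal{T}(I)$, in which $\mathcal{T}$ composes the one-sided reflection $\psi$ with the bounded linear integral operators just described.

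I would obtain existence and uniqueness of the fixed point by contraction. Since $\psi$ is nonexpansive by \eqref{eq:D.4} and each of the remaining operators in $\mathcal{T}$ carries an integration $\int_0^t(\cdot)\,ds$, the map $\mathcal{T}$ is Lipschitz on $D_T^J$ with a constant of the form $CT$, where $C$ depends only on the data $\mu_j,\eta_k,p_{jk},q_{kj}$; choosing $T$ small yields a contraction and hence a unique solution on $[0,T]$, which I would extend to $[0,\infty)$ by concatenation over successive blocks. Feeding two inputs $(\xi,\zeta)$ and $(\xi',\zeta')$ through the same estimates gives a Lipschitz bound on $\Gamma$ uniform on compact time intervals; since the limit $(\xi^*,\zeta^*)$ lives in $C^{J+K}$ and $J_1$-convergence to a continuous limit is uniform on compacts, this upgrades to $J_1$-continuity of $\Gamma$ at the inputs encountered here. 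This is the step I expect to be the main obstacle: the reflection acts only on the $Q$-coordinates while the unreflected $V$-coordinates feed back into the reflected dynamics through $I$, so the contraction must be run simultaneously on the coupled vector rather than coordinate by coordinate, and the Lipschitz constants must be tracked through both the reflection and the Volterra resolvent.

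Second, I would establish weak convergence of the driving data, $(\hat\xi^n,\hat\zeta^n)\Rightarrow(\xi^*,\zeta^*)$. A preliminary functional law of large numbers is required to pin down the random time changes in \eqref{eq:3.18}--\eqref{eq:3.19}: using the conservation identity \eqref{eq:2.16} to bound the infinite-server occupancies and the critical-loading conditions \eqref{HT1}--\eqref{HT2}, I would show the fluid limits $\rttensortwo{V}_k^n\Rightarrow m_k$ and $T_j^n(t)\Rightarrow t$ (equivalently $I_j^n\Rightarrow \mathbf{0}$ at fluid scale) uniformly on compacts, so that $\eta_k\int_0^t\rttensortwo{V}_k^n(s)\,ds\Rightarrow\eta_k m_k t$ and $\bar N_j^n,\bar M_k^n$ converge to the identity. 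The functional central limit theorem applied to the Poisson processes $N_j,M_k$ and to the multinomial routing sums $\Phi_{jk},\Psi_{kj}$, combined with the random-time-change theorem (the limiting clocks being deterministic and continuous) and the continuous-mapping theorem for addition, then yields joint convergence of $(\hat\xi^n,\hat\zeta^n)$ to a Brownian motion; the covariance entries \eqref{eq:3.29*}--\eqref{eq:3.33*} emerge from the multinomial and Poisson limit variances evaluated at the limiting time changes $\mu_j t$ and $\eta_k m_k t$.

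Finally, I would assemble the pieces. The prelimit triple $(\hat Q^n,\hat I^n,\hat V^n)$ satisfies \eqref{eq:3.20}--\eqref{eq:3.21}, and the scheduling constraints \eqref{eq:2.18}--\eqref{eq:2.20} are invariant under diffusion scaling---$\{Q_j^n>0\}=\{\hat Q_j^n>0\}$ and the zero-integral condition \eqref{eq:2.17} is preserved---so $(\hat Q^n,\hat I^n,\hat V^n)=\Gamma(\hat\xi^n,\hat\zeta^n)$ for exactly the mapping $\Gamma$ constructed above. Because $(\xi^*,\zeta^*)$ has continuous sample paths and $\Gamma$ is continuous at continuous inputs, the continuous-mapping theorem gives $(\hat Q^n,\hat I^n,\hat V^n)\Rightarrow\Gamma(\xi^*,\zeta^*)=(Q^*,I^*,V^*)$, and the limit satisfies \eqref{eq:3.27}--\eqref{eq:3.29} by the defining property of $\Gamma$, which completes the proof.
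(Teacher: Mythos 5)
Your proposal is correct in outline and follows the paper's overall architecture---a nonlinear regulator map fed into the continuous mapping theorem, preceded by a fluid-scale law of large numbers to pin down the random time changes and a Donsker/random-time-change argument for $(\hat\xi^n,\hat\zeta^n)$---but your execution of the regulator analysis differs in two genuine ways. First, where the paper runs successive approximations directly on the $V$-coordinate (Lemma \ref{Lemma:A.1}, solving (\ref{eq:A.1}) with $\psi$ appearing inside the iteration, and then builds $(Q,I)$ and proves uniqueness in Proposition \ref{Prop:4.1} via the Skorokhod-lemma identity (\ref{eq:A.9})), you eliminate $V$ through the explicit Volterra resolvent $V_k(t)=u_k(t)-\eta_k\int_0^t e^{-\eta_k(t-s)}u_k(s)\,ds$ and contract on the idleness vector $I$; this buys cleaner, explicit Lipschitz constants and lets you reach arbitrary horizons by concatenating small blocks, avoiding the paper's rather delicate iterated estimate (\ref{eq:B.22}) in the proof of Lemma \ref{Claim:B.1}. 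Second, where the paper proves full $J_1$-continuity of $f$ at arbitrary c\`adl\`ag inputs (Proposition \ref{Prop:4.3}, via time-change manipulations and Gronwall), you establish only sup-norm Lipschitz bounds on compacts and invoke continuity at continuous inputs; this is a legitimate shortcut, since the mapping theorem needs continuity only on a set of full measure under the limit law and $(\xi^*,\zeta^*)$ is a.s.\ continuous, and it simultaneously delivers the continuity of the limit's sample paths that the paper extracts separately in Proposition \ref{Prop:Preserve_Continuity}---though you then owe a measurability argument for $\Gamma$ (e.g., as a pointwise limit of the Picard iterates), which the paper's stronger continuity result renders unnecessary. Two smaller glosses to flag: your identification $(\hat Q^n,\hat I^n,\hat V^n)=\Gamma(\hat\xi^n,\hat\zeta^n)$ quietly uses the standard equivalence between the complementarity conditions (\ref{eq:2.17})--(\ref{eq:2.20}) and the $\psi$-representation of $\mu_j\hat I_j^n$ (this is exactly the paper's step (\ref{eq:A.9}) and should be made explicit), and your fluid step is sketchier than the paper's, which obtains Lemma \ref{Lemma:5.2} by applying the very same regulator map at fluid scale together with a Gronwall argument showing $f_3(\mathbf{0})=\mathbf{0}$---in your framework this reduces to the trivial observation that $\Gamma(\mathbf{0})=\mathbf{0}$ is the unique fixed point, so the gap is one of detail rather than substance.
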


\section{Auxiliary Results}\label{Section:4}
This section establishes the existence of (suitably defined) continuous functions that will aid in the proof of Theorem~\ref{Theorem:Main_Result} via a continuous mapping argument. To that end, let $\xi\in D^J$ and $\zeta\in D^K$ be functions such that
\begin{alignat}{2}
&\sum_{j=1}^{J}\xi_j(t) \,+ \sum_{k=1}^{K}\zeta_k(t)\,=\,0,&&\quad \text{for all } t\ge 0,\label{eq:4.1}\\
&\xi_j(0)\,\ge\, 0,&&\quad\text{for all } j\in [J].\label{eq:4.2}
\end{alignat}
Given such functions $\xi$ and $\zeta$, 
consider the following system of equations 
for $j\in [J]$, $k\in [K]$, and $t\ge 0$:
\begin{align}
&\!\!\!\!\!\!\!\!\!\!\!\!\!\!x_j(t) \,=\, \xi_j(t) \,+ \sum_{k=1}^{K}q_{kj}\eta_k\!\int_{0}^{t}\!y_k(s)\,ds \,+\, \mu_j u_j(t)\,\ge\, 0,\label{eq:4.3}\\[-5pt]
&\!\!\!\!\!\!\!\!\!\!\!\!\!\!y_k(t) \,=\, \zeta_k (t) \,-\, \eta_k\! \int_{0}^{t}\!y_k(s)\,ds \,- \sum_{j=1}^{J}p_{jk}\mu_ju_{j}(t),\label{eq:4.4}\\[-7pt]
&\!\!\!\!\!\!\!\!\!\!\!\!\!\!\sum_{j=1}^{J}x_j(t) \,+ \sum_{k=1}^{K}y_k(t) \,=\, 0,\label{eq:4.5}\\
&\!\!\!\!\!\!\!\!\!\!\!\!\!\!u_j \text{ is nondecreasing with } u_j(0) = 0,\label{eq:4.6}\\[3pt]
&\!\!\!\!\!\!\!\!\!\!\!\!\!\!\int_{0}^{\infty} \!\mathds{1}_{\{x_j(t)\,>\,0\}}\,du_j(t)\,=\,0.\label{eq:4.7}
\end{align}
The following result establishes the existence and uniqueness of a triple $(x,u,y)$ satisfying the above equations.
\begin{proposition}\label{Prop:4.1}
	For every $(\xi, \zeta) \in D^{J+K}$ satisfying (\ref{eq:4.1})--(\ref{eq:4.2}), 
	there exists a unique $(x, u, y)\in D^{2J+K}$ satisfying (\ref{eq:4.3})--(\ref{eq:4.7}).
\end{proposition}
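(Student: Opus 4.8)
The plan is to decouple the infinite-server coordinates from the reflected single-server coordinates, thereby reducing (\ref{eq:4.3})--(\ref{eq:4.7}) to a one-dimensional Skorokhod problem coupled only through a benign convolution, and then to solve the resulting fixed-point equation by a contraction argument. First I would treat (\ref{eq:4.4}) as a linear Volterra equation in $y$: for a fixed candidate $u$, writing $g_k := \zeta_k - \sum_{j=1}^J p_{jk}\mu_j u_j$, the equation $y_k(t) + \eta_k\int_0^t y_k(s)\,ds = g_k(t)$ has the explicit resolvent solution $y_k(t) = g_k(t) - \eta_k\int_0^t e^{-\eta_k(t-s)}g_k(s)\,ds$, whence $\eta_k\int_0^t y_k(s)\,ds = \eta_k\int_0^t e^{-\eta_k(t-s)}g_k(s)\,ds$. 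Substituting this into (\ref{eq:4.3}) expresses $x$ through $u$ and the data alone:
\[
x_j(t) = \tilde\xi_j(t) - \sum_{i=1}^J \mu_i\int_0^t R_{ji}(t-s)\,u_i(s)\,ds + \mu_j u_j(t),
\]
where $\tilde\xi_j(t) := \xi_j(t) + \sum_{k=1}^K q_{kj}\eta_k\int_0^t e^{-\eta_k(t-s)}\zeta_k(s)\,ds \in D$ and $R_{ji}(\tau) := \sum_{k=1}^K q_{kj}p_{ik}\eta_k e^{-\eta_k\tau}\ge 0$. I would also record that (\ref{eq:4.5}) is redundant: summing (\ref{eq:4.3}) over $j$ and (\ref{eq:4.4}) over $k$, and using that $P,Q$ are stochastic together with (\ref{eq:4.1}), forces $\sum_j x_j + \sum_k y_k \equiv 0$ automatically. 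Hence it suffices to produce $(x,u)$ satisfying (\ref{eq:4.3}), (\ref{eq:4.6}), (\ref{eq:4.7}), and then to define $y$ by the resolvent formula.

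Writing $h_j := \tilde\xi_j - \sum_{i=1}^J \mu_i\int_0^\cdot R_{ji}(\cdot-s)u_i(s)\,ds$, the requirements $x_j = h_j + \mu_j u_j \ge 0$, $u_j$ nondecreasing with $u_j(0)=0$, and the complementarity (\ref{eq:4.7}) are solved exactly by the one-dimensional reflection map of (\ref{eq:D.2})--(\ref{eq:D.3}): $\mu_j u_j = \psi(h_j)$ and $x_j = \phi(h_j)$. Since $h_j$ depends on $u$ through the convolution, I would set $(\Theta u)_j := \mu_j^{-1}\psi(h_j)$ and seek a fixed point. On a short interval $[0,\delta]$ the contraction estimate follows from the Lipschitz bound (\ref{eq:D.4}) for $\psi$ and from $\int_0^\delta R_{ji}(\tau)\,d\tau = \sum_k q_{kj}p_{ik}(1-e^{-\eta_k\delta}) \le \eta\delta$, giving $\max_j\|(\Theta u)_j - (\Theta u')_j\|_\delta \le C\delta\,\max_i\|u_i-u_i'\|_\delta$ with $C = \eta(\max_j\mu_j^{-1})\sum_i\mu_i$; thus $\Theta$ contracts for $\delta < 1/C$. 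Banach's theorem yields a unique $u$ on $[0,\delta]$, and the structural properties follow at once: because $(Ku)_j(0)=0$, one has $u_j(0) = \mu_j^{-1}[-h_j(0)]^+ = \mu_j^{-1}[-\xi_j(0)]^+ = 0$ by (\ref{eq:4.2}); monotonicity and complementarity are built into the reflection-map identities; and $x,u$ lie in $D$ since $\psi,\phi$ and the (continuous) convolutions preserve $D$. Recovering $y$ completes a solution on $[0,\delta]$.

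Finally I would extend to $[0,\infty)$ by iterating the construction on successive blocks $[a,a+\delta]$. Setting $v_j := u_j - u_j(a)$ and splitting the convolution into its past part (over $[0,a]$, already determined and continuous in $t$) and its new part (over $[a,t]$), the increment problem for $v$ has the same form and the same kernel bound $\int_a^{a+\delta}R_{ji}\le\eta\delta$, hence the same contraction constant; crucially the effective forcing has left-endpoint value $x_j(a)\ge 0$, so the reflection map again returns $v_j(a)=0$. Because $\delta$ depends only on the primitives (the kernel is time-homogeneous, a function of $t-s$), one fixed $\delta$ works on every block, and uniqueness on overlaps patches the local solutions into a unique global $(x,u,y)\in D^{2J+K}$. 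The hard part is exactly this coupling through the convolution operator: it obstructs a one-shot application of the reflection map, and the whole argument rests on the short-interval operator-norm bound that makes $\Theta$ a contraction, with the nonnegativity of the endpoint values $x_j(a)$ providing the clean reinitialization that lets a single $\delta$ suffice globally.
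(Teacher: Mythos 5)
Your proposal is correct, but it takes a genuinely different route from the paper. The paper keeps $y$ as the unknown: it substitutes $\mu_j u_j=\psi\bigl(\xi_j+\sum_l q_{lj}\eta_l\int_0^\cdot y_l\,ds\bigr)$ into the $y$-dynamics to obtain the fixed-point equation (\ref{eq:A.1}), proves existence and uniqueness of its solution in Lemma \ref{Lemma:A.1} by successive approximations (with the fairly heavy inductive growth estimates of Lemma \ref{Claim:B.1} to get a Cauchy sequence on all of $[0,T]$, and a separate interval-partition argument for uniqueness), and only then defines $u$ and $x$ through the reflection map. You instead eliminate $y$ entirely by solving the linear Volterra part in closed form with the exponential resolvent, which converts the system into a reflected convolution equation for $u$ alone, and you run the Banach contraction in $u$ with the kernel bound $\int_0^\delta R_{ji}(\tau)\,d\tau\le\eta\delta$; the time-homogeneity of the kernel then gives a single contraction window $\delta$ valid on every block, and the endpoint values $x_j(a)\ge0$ let the reflection map reinitialize cleanly. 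Your estimates check out (including $\sum_k q_{kj}p_{ik}\le 1$, the Lipschitz bound (\ref{eq:D.4}), and $u_j(0)=\mu_j^{-1}[-\xi_j(0)]^+=0$ via (\ref{eq:4.2})), and your observation that (\ref{eq:4.5}) is automatic matches the paper's own verification. What your route buys is a cleaner and more explicit existence-uniqueness proof with a uniform local contraction constant, avoiding the paper's $m^2n^m(2\delta\eta JK)^{n-1}$ bookkeeping; what the paper's route buys is that its $y$-based solver is exactly the map $f_3$ that is reused downstream (in (\ref{eq:A.11})--(\ref{eq:A.12}) and Propositions \ref{Prop:4.3} and \ref{Prop:Preserve_Continuity}) for the continuous-mapping argument, whereas your $u$-parametrization would require reproving continuity in that form. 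Two small points you should make explicit in a final writeup: in the uniqueness direction you need (i) that for a \emph{given} solution $(x,u,y)$ of (\ref{eq:4.3})--(\ref{eq:4.7}), the function $y$ must equal the resolvent formula (uniqueness of the linear Volterra equation, e.g.\ via Gronwall), and (ii) the standard characterization that any pair satisfying $x_j=h_j+\mu_ju_j\ge0$ together with (\ref{eq:4.6})--(\ref{eq:4.7}) and $h_j(0)\ge0$ must be $(\phi(h_j),\mu_j^{-1}\psi(h_j))$ — the same fact the paper invokes at its equation (\ref{eq:A.9}) — so that every solution's $u$ is a fixed point of your map $\Theta$; also the stray ``$(Ku)_j$'' should read $h_j$.
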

\begin{proof}
    See Appendix \ref{appendix:A}.
\end{proof}

The next result is immediate from Proposition \ref{Prop:4.1}.
\begin{corollary}\label{Cor:4.2}
	There exists a function $f:D^{J+K}\rightarrow D^{2J+K}$ such that whenever $(\xi,\zeta)\in D^{J+K}$ satisfies (\ref{eq:4.1})--(\ref{eq:4.2}), $f(\xi, \zeta)\in D^{2J+K}$ satisfies (\ref{eq:4.3})--(\ref{eq:4.7}).
\end{corollary}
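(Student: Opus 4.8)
The plan is to recast the coupled system \eqref{eq:4.3}--\eqref{eq:4.7} as a fixed-point problem for the regulator $u$ alone. The starting point is that, for any fixed nondecreasing $u\in D^J$ with $u(0)=\mathbf{0}$, equation \eqref{eq:4.4} determines $y$ uniquely. Indeed, setting $w_k(t):=\zeta_k(t)-\sum_{j}p_{jk}\mu_j u_j(t)$ and $Y_k(t):=\int_0^t y_k(s)\,ds$, relation \eqref{eq:4.4} is equivalent to the linear ODE $Y_k'(t)=w_k(t)-\eta_k Y_k(t)$ with $Y_k(0)=0$, whose unique solution is $Y_k(t)=\int_0^t e^{-\eta_k(t-s)}w_k(s)\,ds$; then $y_k=w_k-\eta_k Y_k\in D$. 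Substituting into \eqref{eq:4.3} gives $x_j=z_j[u]+\mu_j u_j$ with free term $z_j[u](t):=\xi_j(t)+\sum_k q_{kj}\eta_k Y_k(t)$, which depends on $u$ only through the (continuous) processes $Y_k$. Conditions \eqref{eq:4.3}, \eqref{eq:4.6} and \eqref{eq:4.7} say precisely that, for each $j$, the pair $(\mu_j u_j,\,x_j)$ solves the one-dimensional Skorokhod problem driven by $z_j[u]$; by uniqueness of that problem this is equivalent to $\mu_j u_j=\psi(z_j[u])$ and $x_j=\phi(z_j[u])$.

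I would therefore define $\Gamma(u)_j:=\mu_j^{-1}\psi(z_j[u])$ and look for a fixed point on the closed set $\mathcal U_T:=\{u\in D_T^J:u\text{ nondecreasing},\,u(0)=\mathbf{0}\}$ for each fixed $T>0$. Since $\psi$ produces a nondecreasing function and $\Gamma(u)_j(0)=\mu_j^{-1}[-\xi_j(0)]^+=0$ by \eqref{eq:4.2}, $\Gamma$ maps $\mathcal U_T$ into itself. For the contraction I would equip $D_T^J$ with the time-weighted norm $\|f\|_{\beta}:=\sup_{t\in[0,T]}e^{-\beta t}\max_j|f_j(t)|$ for a large $\beta>0$, which is equivalent to the sup norm and hence makes $\mathcal U_T$ complete. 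The smoothing in $Y_k$ yields $e^{-\beta t}\,|Y_k[u](t)-Y_k[u'](t)|\le\beta^{-1}\big(\sum_j p_{jk}\mu_j\big)\|u-u'\|_\beta$, and the weighted version of \eqref{eq:D.4}, namely $\|\psi(a)-\psi(b)\|_\beta\le\|a-b\|_\beta$ (valid because $e^{-\beta t}\le e^{-\beta s}$ for $s\le t$), then gives $\|\Gamma(u)-\Gamma(u')\|_\beta\le C\beta^{-1}\|u-u'\|_\beta$ for a constant $C$ depending only on $p,q,\mu,\eta$. Crucially $C$ does not depend on $T$, so fixing $\beta>C$ makes $\Gamma$ a contraction on $\mathcal U_T$ for every $T$ at once; the Banach fixed-point theorem yields a unique $u$ on $[0,T]$, and these solutions are consistent across $T$ by uniqueness, defining a unique $u$, hence a unique triple $(x,u,y)$, on $[0,\infty)$.

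It then remains to confirm that the fixed point solves \eqref{eq:4.3}--\eqref{eq:4.7}. Nonnegativity $x_j=\phi(z_j[u])\ge0$, the complementarity \eqref{eq:4.7}, and monotonicity of $u_j$ are built into the defining properties of the pair $(\psi,\phi)$, while $u_j(0)=0$ uses \eqref{eq:4.2} as above. The constraint \eqref{eq:4.5} is not imposed but is automatic: summing \eqref{eq:4.3} over $j$ and \eqref{eq:4.4} over $k$, the $Y_k$-terms cancel because $\sum_j q_{kj}=1$, the regulator terms cancel because $\sum_k p_{jk}=1$, and the remaining free terms vanish by \eqref{eq:4.1}. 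Finally all components lie in $D$ ($Y_k$ is continuous, $w_k$ and $\xi_j$ lie in $D$, and $(\psi,\phi)$ map $D$ into $D$), so $(x,u,y)\in D^{2J+K}$. Uniqueness transfers in the reverse direction as well: any solution of \eqref{eq:4.3}--\eqref{eq:4.7} produces, via the Volterra solve and one-dimensional Skorokhod uniqueness, a fixed point of $\Gamma$, which is unique.

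I expect the main obstacle to be the coupling between reflection and infinite-server dynamics: the input $z_j[u]$ driving the $j$th reflection is a functional of \emph{all} the regulators through the $Y_k$, so the system does not split into independent one-dimensional Skorokhod maps, and a naive sup-norm estimate gives only a local (small-$T$) contraction. The device that overcomes this is the exponential time weighting, which turns the $O(T)$ Lipschitz factor into a $\beta^{-1}$ factor uniform in $T$, upgrading the argument to a single global contraction and bypassing an interval-by-interval patching that would otherwise have to track the Volterra memory across restart times.
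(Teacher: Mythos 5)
Your argument is correct, and it establishes the underlying existence-and-uniqueness result (Proposition \ref{Prop:4.1}, from which the corollary is immediate) by a genuinely different route from the paper. The paper eliminates the opposite variable: substituting $\mu_j u_j=\psi\big(\xi_j+\sum_{l}q_{lj}\eta_l\int_{0}^{\cdot}y_l\big)$ into (\ref{eq:4.4}) yields a fixed-point equation in $y$ alone, namely (\ref{eq:A.1}), which Appendix \ref{appendix:C} solves by successive approximations with a fairly delicate induction over time windows $[0,m\delta]$ (the $m^2n^m(2\delta\eta JK)^{n-1}$ bounds plus a ratio test to get a Cauchy sequence on $[0,T]$), and uniqueness is then proved separately by partitioning $[0,T]$ into intervals of length $\tfrac{1}{2}(2JK\eta)^{-1}$. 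Your dual formulation---freezing $u$, solving the linear equation (\ref{eq:4.4}) in closed form by variation of constants, and contracting $u\mapsto\big(\mu_j^{-1}\psi(z_j[u])\big)_{j\in[J]}$ in the exponentially weighted norm $\|\cdot\|_\beta$---buys a cleaner package: the Volterra step is explicit rather than iterative, the $\beta^{-1}$ factor makes the contraction global in $T$ (so no interval-by-interval patching for existence and no separate extension argument beyond consistency), and uniqueness comes for free from Banach's theorem combined with one-dimensional Skorokhod uniqueness, which is the same device the paper uses at (\ref{eq:A.9}); your weighted estimates for $\psi$ and for the convolution kernel are both valid as stated. What the paper's route buys in exchange is that its solution map $f_3$ is defined on all of $D^{J+K}$ without reference to (\ref{eq:4.1})--(\ref{eq:4.2}), which is exactly what the global definitions (\ref{eq:A.11})--(\ref{eq:A.12}) and the continuity proof of Proposition \ref{Prop:4.3} later exploit; your construction delivers the same if you drop the normalization $u(0)=\mathbf{0}$ from $\mathcal{U}_T$ (harmless: $\psi$ absorbs $[-z_j[u](0)]^+$ and the contraction estimate is unchanged), or, for the corollary as literally stated, you may simply extend $f$ arbitrarily off the admissible set, since only admissible inputs carry a requirement. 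Your verification of (\ref{eq:4.5}) by summing and using stochasticity of $P$ and $Q$ together with (\ref{eq:4.1}) coincides with the paper's.
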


The following result is useful in the proof of Proposition~\ref{Prop:4.1}. To state it, given functions $\xi\in D^J$ and $\zeta\in D^{K}$, consider the following equation for $k\in [K]$ and $t\ge 0$: 
\begin{align}
\!\!\!\!\!\!\!\!\!\!\!\!y_k(t) \,&=\, \zeta_k(t) \,-\,\eta_k\!\int_{0}^{t}\!\!y_k(s)\,ds \nonumber\\&\qquad- \sum_{j=1}^{J}p_{jk}\psi\Big(\xi_j \,+ \sum_{l=1}^{K}q_{lj}\eta_l\!\int_{0}^{\cdot}\!y_l(s)\,ds\Big)(t).\label{eq:A.1}
\end{align}
\begin{lemma}\label{Lemma:A.1}
	For each $(\xi, \zeta)\in D^{J+K}$, there exists a unique $y\in D^{K}$ satisfying (\ref{eq:A.1}).
\end{lemma}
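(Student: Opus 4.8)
The plan is to recast (\ref{eq:A.1}) as a fixed-point equation on a finite horizon and solve it by a contraction argument. Fix $T>0$ and define the operator $\Gamma:D_T^K\to D_T^K$ componentwise by
\begin{align*}
\Gamma(y)_k(t)\coloneqq \zeta_k(t)-\eta_k\!\int_0^t\! y_k(s)\,ds-\sum_{j=1}^J p_{jk}\,\psi\Big(\xi_j+\sum_{l=1}^K q_{lj}\eta_l\!\int_0^\cdot\! y_l(s)\,ds\Big)(t),
\end{align*}
so that $y$ solves (\ref{eq:A.1}) on $[0,T]$ if and only if $y=\Gamma(y)$. First I would verify that $\Gamma$ maps $D_T^K$ into itself: each $\zeta_k\in D_T$, the maps $t\mapsto\int_0^t y_l(s)\,ds$ are Lipschitz continuous and hence lie in $D_T$, and $\psi$ carries $D_T$ into $D_T$, so $\Gamma(y)\in D_T^K$. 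Since $(D_T^K,\|\cdot\|_{T,K})$ is a Banach space, it suffices to produce a unique fixed point there and then let $T$ vary.

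The core estimate exploits the nonexpansivity of the reflection map in (\ref{eq:D.4}). For $y,\tilde y\in D_T^K$, set $g(t)\coloneqq\max_{k\in[K]}\sup_{s\in[0,t]}|y_k(s)-\tilde y_k(s)|$. In the difference $\Gamma(y)_k(t)-\Gamma(\tilde y)_k(t)$ the fixed datum $\xi_j$ cancels inside the argument of $\psi$, so (\ref{eq:D.4}) gives
\begin{align*}
&\Big|\psi\Big(\xi_j+\sum_l q_{lj}\eta_l\!\int_0^\cdot\! y_l\,ds\Big)(t)-\psi\Big(\xi_j+\sum_l q_{lj}\eta_l\!\int_0^\cdot\!\tilde y_l\,ds\Big)(t)\Big|\\
&\qquad\le\sup_{s\le t}\Big|\sum_l q_{lj}\eta_l\!\int_0^s\!(y_l-\tilde y_l)\,dr\Big|\le C_1\!\int_0^t\! g(s)\,ds,
\end{align*}
with $C_1\coloneqq\max_j\sum_l q_{lj}\eta_l<\infty$; the crucial point is that the $\psi$-contribution to the difference has already been smoothed by one integration. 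Combining this with the direct bound $|\eta_k\int_0^t(y_k-\tilde y_k)\,ds|\le\eta\int_0^t g$ and with $\sum_j p_{jk}\le C_2\coloneqq\max_k\sum_j p_{jk}$, I obtain a constant $C$ depending only on the model parameters such that $\max_{k}\sup_{s\le t}|\Gamma(y)_k(s)-\Gamma(\tilde y)_k(s)|\le C\int_0^t g(s)\,ds$ for all $t\in[0,T]$.

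Writing $h_m(t)\coloneqq\max_k\sup_{s\le t}|\Gamma^m(y)_k(s)-\Gamma^m(\tilde y)_k(s)|$, the previous estimate reads $h_m(t)\le C\int_0^t h_{m-1}(s)\,ds$ with $h_0(t)=g(t)$, and a routine induction yields $h_m(T)\le\frac{(CT)^m}{m!}\,\|y-\tilde y\|_{T,K}$. Hence $\Gamma^m$ is a contraction on $D_T^K$ once $m$ is large enough that $(CT)^m/m!<1$, and the generalized contraction mapping principle gives a unique fixed point of $\Gamma$ on $[0,T]$. Since $C$ is independent of $T$, uniqueness forces the solutions on different horizons to agree on their common domain, so patching them produces a unique $y\in D^K$ solving (\ref{eq:A.1}) on $[0,\infty)$. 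I expect the main obstacle to be the treatment of the reflection map in the contraction estimate: one must check that (\ref{eq:D.4}) can be applied after the cancellation of $\xi_j$ and that the resulting difference retains the $\int_0^t$ structure, since it is exactly this extra integration that upgrades a merely Lipschitz bound into the factorial decay needed for $\Gamma^m$ to contract on an arbitrary horizon.
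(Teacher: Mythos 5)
Your argument is correct, and your operator $\Gamma$ is exactly the map $L$ that the paper also uses; the difference lies in how the fixed point is extracted, and your route is genuinely cleaner. The paper runs successive approximations $y^n = L(y^{n-1})$ and proves the Cauchy property by hand (its Lemma~\ref{Claim:B.1}): it first fixes $\delta$ with $2\delta\eta JK<1$ to get geometric decay on $[0,\delta]$, then performs a two-index induction over the blocks $[0,m\delta]$ to obtain bounds of the form $m^2 n^m(2\delta\eta JK)^{n-1}C_{m\delta}$, summable by the ratio test; uniqueness is then proved \emph{separately} by partitioning $[0,T]$ into intervals of length $\tfrac12(2JK\eta)^{-1}$ and iterating a $\tfrac12$-contraction estimate on each. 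Your observation — that each application of $\Gamma$ passes through one time integration, so that after the $\xi_j$-cancellation and the nonexpansivity bound (\ref{eq:D.4}) one gets $h_m(t)\le C\int_0^t h_{m-1}(s)\,ds$ and hence the factorial bound $h_m(T)\le \frac{(CT)^m}{m!}\|y-\tilde y\|_{T,K}$ — replaces all of this: $\Gamma^m$ is a contraction on $D_T^K$ for any fixed $T$, and the $m$-th-iterate contraction principle delivers existence and uniqueness in one stroke, with no interval-splitting and no separate uniqueness argument. (Notably, the paper itself derives the Lipschitz bound $\|L(y)-L(\tilde y)\|_{T,K}\le \eta T(JK+1)\|y-\tilde y\|_{T,K}$ but does not exploit the iterated-integral refinement, which is why it needs small intervals.) Your construction also preserves the byproduct the paper uses in Proposition~\ref{Prop:Preserve_Continuity}: the Picard iterates from $y^0\equiv\mathbf{0}$ with continuous data are continuous and converge uniformly on $[0,T]$, so the fixed point is continuous. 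Two small points of hygiene: in the patching step, the independence of $C$ from $T$ is irrelevant — what you actually need (and have) is that $\Gamma$ is causal, i.e., $\Gamma(y)_k(t)$ depends only on $y\vert_{[0,t]}$, so the restriction of a $T_2$-solution to $[0,T_1]\subseteq[0,T_2]$ is a fixed point on $[0,T_1]$ and horizon-uniqueness does the rest; and when invoking (\ref{eq:D.4}) you should note it holds with any horizon $t\le T$ in place of $T$, which is immediate from the definition (\ref{eq:D.2}) and is what makes your bound $C_1\int_0^t g(s)\,ds$ uniform in $s\le t$. Neither affects correctness.
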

\begin{proof}
    See Appendix \ref{appendix:C}.
\end{proof}

Below we provide a description of the function $f$ from Corollary \ref{Cor:4.2}. Let $f_3: D^{J+K}\rightarrow D^K$ be the mapping that sends $(\xi, \zeta)\in D^{J+K}$ to the unique $y\in D^K$ satisfying (\ref{eq:A.1}); see Lemma \ref{Lemma:A.1}. Then, following the proof of Proposition \ref{Prop:4.1}, let $f_1:D^{J+K}\rightarrow D^J$ and $f_2:D^{J+K}\rightarrow D^J$ be the mappings defined as follows:
\begin{align}
&\!\!\!\!\!\!\!\!\!\!\!\!\!\!\!f_1(\xi, \zeta)\notag\\&\!\!\!\!\!\!\!\!\!\!\!\!\!\!\coloneqq  \Big(\phi\big(\pi_j\circ\xi + \sum_{l=1}^{K}q_{lj}\eta_l\!\int_{0}^{\cdot}\!\!\left(\pi_l\circ f_3(\xi, \zeta)\right)\!(s)\,ds\big)\Big)_{j\in [J]},\label{eq:A.11}\\
&\!\!\!\!\!\!\!\!\!\!\!\!\!\!\!f_2(\xi, \zeta)\notag\\&\!\!\!\!\!\!\!\!\!\!\!\!\!\!\coloneqq  \Big(\mu_j^{-1}\psi\big(\pi_j\circ\xi + \sum_{l=1}^{K}q_{lj}\eta_l\!\int_{0}^{\cdot}\!\!\left(\pi_l\circ f_3(\xi, \zeta)\right)\!(s)\,ds\big)\Big)_{j\in [J]}.\label{eq:A.12}
\end{align}
Let $f:=(f_1,f_2,f_3)$. Then, whenever $(\xi,\zeta)\in D^{J+K}$ satisfies (\ref{eq:4.1})--(\ref{eq:4.2}), $f(\xi, \zeta)\in D^{2J+K}$ satisfies (\ref{eq:4.3})--(\ref{eq:4.7}). The next result establishes continuity of $f$.
\begin{proposition}\label{Prop:4.3}
	The function $f:D^{J+K}\rightarrow D^{2J+K}$ is continuous when both the domain and range are endowed with the Skorokhod $J_1$ topology.
\end{proposition}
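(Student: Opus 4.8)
The plan is to reduce continuity of $f=(f_1,f_2,f_3)$ to continuity of the map $f_3$ defined through (\ref{eq:A.1}), and to establish the latter by a relative-compactness-plus-uniqueness argument. Throughout, fix $T>0$ and a sequence $(\xi^n,\zeta^n)\to(\xi,\zeta)$ in $D^{J+K}$ with the Skorokhod $J_1$ topology, so that there exist time changes $\lambda_n\in\Lambda_T$ with $\|\xi^n\circ\lambda_n-\xi\|_{T,J}\vee\|\zeta^n\circ\lambda_n-\zeta\|_{T,K}\vee\|\lambda_n-e\|_T\to 0$. Write $y^n=f_3(\xi^n,\zeta^n)$ and $y=f_3(\xi,\zeta)$, and set $z_j^n=\xi_j^n+\sum_l q_{lj}\eta_l\int_0^\cdot y_l^n(s)\,ds$. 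I would rely on three facts that are standard or easily verified here: (i) the integration map $x\mapsto\int_0^\cdot x(s)\,ds$ is continuous from $D^k$ with $J_1$ into $C^k$ with the uniform topology (it is uniformly Lipschitz in $t$, hence equicontinuous, and converges pointwise at continuity points of the limit); (ii) the maps $\psi$ and $\phi$ in (\ref{eq:D.2})--(\ref{eq:D.3}) commute with time changes, $\psi(x\circ\lambda)=\psi(x)\circ\lambda$ and $\phi(x\circ\lambda)=\phi(x)\circ\lambda$, and are Lipschitz in the uniform norm by (\ref{eq:D.4})--(\ref{eq:D.5}), so in particular they are $J_1$-continuous; and (iii) the sum of a $J_1$-convergent sequence and a sequence converging uniformly to a continuous limit is again $J_1$-convergent.

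First I would record the uniform bound $\sup_n\|y^n\|_{T,K}<\infty$. Since $J_1$-convergent sequences are bounded in the uniform norm, $\sup_n(\|\xi^n\|_{T,J}\vee\|\zeta^n\|_{T,K})<\infty$; feeding the estimate $|\psi(z_j^n)(t)|\le\|z_j^n\|_t\le\|\xi_j^n\|_t+\sum_l q_{lj}\eta_l\int_0^t|y_l^n|$ into (\ref{eq:A.1}) and applying Gronwall's inequality yields the claim. The key step is then to show relative compactness of $\{y^n\}$ in $(D^K,J_1)$, for which it suffices to control the standard $J_1$ oscillation modulus, i.e. to prove $\lim_{\delta\to0}\limsup_n w'(y^n,\delta,T)=0$ together with the uniform bound (see \citet{Billingsley_1999} or \citet{Whitt_2002}). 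Decomposing $y^n$ through (\ref{eq:A.1}), the term $-\eta_k\int_0^\cdot y_k^n$ is uniformly Lipschitz and so contributes $O(\delta)$ to the modulus; the term $\zeta_k^n$ has vanishing modulus because $\{\zeta^n\}$ is $J_1$-relatively compact; and for the reflection term I would use that $\psi(z_j^n)$ is nondecreasing and satisfies, on any interval $[a,b)$, the increment bound $\psi(z_j^n)(b^-)-\psi(z_j^n)(a)\le\sup_{a\le s<b}|z_j^n(s)-z_j^n(a)|$, so that $w'(\psi(z_j^n),\delta,T)\le w'(z_j^n,\delta,T)\le w'(\xi_j^n,\delta,T)+O(\delta)$. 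Combining these bounds on a common refined partition gives the required modulus control, and hence relative compactness.

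Next I would identify subsequential limits. Any subsequence of $\{y^n\}$ then has a further subsequence converging in $J_1$ to some $\tilde y\in D^K$. To see that $\tilde y$ solves (\ref{eq:A.1}) with data $(\xi,\zeta)$, I would evaluate that identity at a point $t$ that is a continuity point of $\tilde y$, of $\zeta$, and of each $\psi(\tilde z_j)$, where $\tilde z_j=\xi_j+\sum_l q_{lj}\eta_l\int_0^\cdot\tilde y_l$ (all but countably many $t$ qualify): along the subsequence, $J_1$-convergence gives pointwise convergence at such $t$, fact (i) gives $\int_0^t y_k^n\to\int_0^t\tilde y_k$, and facts (i) and (iii) give $z_j^n\to\tilde z_j$ in $J_1$, whence $\psi(z_j^n)(t)\to\psi(\tilde z_j)(t)$ by fact (ii). Passing to the limit at each such $t$, and extending to all $t$ by right-continuity, shows that $\tilde y$ satisfies (\ref{eq:A.1}); by the uniqueness in Lemma~\ref{Lemma:A.1}, $\tilde y=y$. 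Since every subsequence has a further subsequence converging to the same limit $y$, I conclude $y^n\to y$ in $J_1$, establishing continuity of $f_3$.

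Finally I would upgrade this to continuity of the full map $f$ into $D^{2J+K}$, the subtlety being that $J_1$-convergence on the product space requires a single time change for all $2J+K$ coordinates, whereas the coordinates of $f(\xi,\zeta)$ generally share jumps inherited from $\xi$. To obtain a common time change I would reuse the input changes $\lambda_n$: having shown $y^n\to y$ in $J_1$, fact (i) gives $\int_0^\cdot y^n\to\int_0^\cdot y$ uniformly, and since that limit is continuous, composing with $\lambda_n$ preserves the uniform convergence, so $z_j^n\circ\lambda_n\to z_j$ uniformly, where $z_j=\xi_j+\sum_l q_{lj}\eta_l\int_0^\cdot y_l$. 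By fact (ii) this gives $\psi(z_j^n)\circ\lambda_n\to\psi(z_j)$ and $\phi(z_j^n)\circ\lambda_n\to\phi(z_j)$ uniformly, which by (\ref{eq:A.11})--(\ref{eq:A.12}) are exactly the coordinates of $f_2$ and $f_1$; re-deriving $y^n$ from (\ref{eq:A.1}) shows likewise that $y^n\circ\lambda_n\to y$ uniformly. Thus every coordinate of $f(\xi^n,\zeta^n)$ converges uniformly after composition with the single time change $\lambda_n\to e$, which is precisely $J_1$-convergence in $D^{2J+K}$. I expect the main obstacle to be the uniform modulus control in the relative-compactness step; the decisive point there is that the one-sided reflection $\psi$ does not increase the $J_1$ oscillation modulus, while the integral terms are uniformly Lipschitz.
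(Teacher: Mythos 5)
Your proposal is correct, but it proves the result by a genuinely different route than the paper. The paper also reduces everything to continuity of $f_3$, but then argues directly and quantitatively: it passes to the equivalent metric $\tilde{d}_T^{\,K}$ built from absolutely continuous time changes with $\|\dot{\lambda}^n-1\|_T\to 0$, composes the limit solution with the \emph{input} homeomorphisms $\lambda^n$, uses the chain rule to compare $\int_0^{\lambda^n(\cdot)}y_k$ with $\int_0^{\cdot}y_k^n$, invokes the Lipschitz property (\ref{eq:D.4}) and the commutation $\psi(x)\circ\lambda = \psi(x\circ\lambda)$, and closes the estimate $\max_k\|y_k\circ\lambda^n-y_k^n\|_t$ with Gronwall's inequality---so the same time changes that work for the data are shown to work for $f_3$, with an explicit error bound. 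Your argument replaces this single estimate with a compactness-plus-uniqueness scheme: uniform boundedness via Gronwall, relative compactness via control of the $J_1$ modulus $w'$ (the key observation that the one-sided reflection $\psi$ does not increase the oscillation modulus is correct, as your increment bound $\psi(z)(b^-)-\psi(z)(a)\le \sup_{a\le s<b}|z(s)-z(a)|$ shows), and identification of every subsequential limit as the unique solution of (\ref{eq:A.1}) from Lemma~\ref{Lemma:A.1}. What each buys: the paper's proof is shorter, self-contained, and yields a rate-type bound, but it silently leaves the continuity of $f_1$, $f_2$, and of the joint map $f$ ``for brevity''; your final step---showing all $2J+K$ coordinates converge uniformly after composition with the \emph{single} time change $\lambda_n$---explicitly fills exactly that omission, which is a real merit since $J_1$ convergence of a vector is not implied by coordinatewise convergence. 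One caution on your compactness step: the phrase ``common refined partition'' is delicate, because the common refinement of two $\delta$-sparse partitions need not be $\delta$-sparse (this is precisely why $J_1$ is not preserved under sums). The correct move, which your setup already permits, is to take one partition from the joint modulus $w'\big((\xi^n,\zeta^n),\delta,T\big)$ of the vector in $D^{J+K}$---available because the convergence of the data is joint---and then refine it to mesh at most $2\delta$ while keeping $\delta$-sparseness (split any long subinterval into pieces of length in $(\delta,2\delta]$), so that the Lipschitz integral terms contribute $O(\delta)$ on every cell; with that reading, your modulus bound and hence the whole argument go through.
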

\begin{proof}
    See Appendix \ref{appendix:B}.
\end{proof}

\begin{proposition}\label{Prop:Preserve_Continuity}
    The function $f$ maps $C^{J+K}$ into $C^{2J+K}$.
\end{proposition}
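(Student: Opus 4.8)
The plan is to exploit the representation $f=(f_1,f_2,f_3)$ and to show separately that each of $f_3(\xi,\zeta)$, $f_1(\xi,\zeta)$, and $f_2(\xi,\zeta)$ has continuous sample paths whenever $(\xi,\zeta)\in C^{J+K}$. Two elementary facts do essentially all the work. First, for any locally bounded $w\in D$---in particular any c\`adl\`ag $w$, since such functions are bounded on compact intervals---the primitive $t\mapsto\int_0^t w(s)\,ds$ is (Lipschitz) continuous. Second, the one-sided reflection map $\psi$ of (\ref{eq:D.2}) maps $C$ into $C$, and hence so does $\phi=\mathrm{id}+\psi$ of (\ref{eq:D.3}). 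The key structural observation is that the integral terms appearing throughout (\ref{eq:A.1}), (\ref{eq:A.11}), and (\ref{eq:A.12}) are automatically continuous regardless of the regularity of the inputs, so that the only possible discontinuities in the outputs are inherited from $\xi$ and $\zeta$.

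I would first treat $y=f_3(\xi,\zeta)$. Fix $(\xi,\zeta)\in C^{J+K}$ and let $y\in D^K$ be the unique solution of (\ref{eq:A.1}) furnished by Lemma~\ref{Lemma:A.1}. Since $y$ is c\`adl\`ag, each $\int_0^\cdot y_l(s)\,ds$ is continuous, so the function $z_j:=\xi_j+\sum_{l=1}^{K}q_{lj}\eta_l\int_0^\cdot y_l(s)\,ds$ is continuous (continuous $\xi_j$ plus a continuous integral). Applying the continuity-preservation of $\psi$ gives that each $\psi(z_j)$ is continuous. Reading (\ref{eq:A.1}) as an identity, its right-hand side is then $\zeta_k$ (continuous) minus $\eta_k\int_0^\cdot y_k(s)\,ds$ (continuous) minus $\sum_{j=1}^{J}p_{jk}\psi(z_j)$ (continuous), hence continuous; since the left-hand side equals $y_k$, we conclude $y_k\in C$. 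There is no circularity here: the integral terms are continuous merely because $y$ is c\`adl\`ag, so the identity (\ref{eq:A.1}) bootstraps $y$ from c\`adl\`ag to continuous in a single step.

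With $y=f_3(\xi,\zeta)$ now known to be continuous, the arguments of $\phi$ and $\psi$ in (\ref{eq:A.11}) and (\ref{eq:A.12}) are exactly the functions $z_j$ above, which are continuous. The continuity-preservation of $\phi$ and $\psi$ then yields that $f_1(\xi,\zeta)=(\phi(z_j))_{j\in[J]}$ and $f_2(\xi,\zeta)=(\mu_j^{-1}\psi(z_j))_{j\in[J]}$ are continuous. Combining the three parts gives $f(\xi,\zeta)\in C^{2J+K}$, as claimed.

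The only step requiring genuine (if standard) justification is the claim that $\psi$ maps $C$ into $C$, and this is where I expect the small amount of real work to lie. It follows from the fact that the running supremum of a continuous function is continuous: writing $\psi(x)(t)=\sup_{s\in[0,t]}g(s)$ with $g:=[-x]^+$ continuous and nonnegative, the function $\psi(x)$ is nondecreasing, and its continuity at each $t$ is immediate from the continuity of $g$. Everything else is a matter of tracking which terms are integrals---and hence continuous regardless of the inputs---and which inherit continuity directly from $\xi$ and $\zeta$.
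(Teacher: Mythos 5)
Your proof is correct, but it takes a genuinely different route from the paper's. The paper proves $f_3(C^{J+K})\subseteq C^K$ by rerunning the successive-approximation scheme from the proof of Lemma~\ref{Lemma:A.1} with continuous inputs: starting from $y^0\equiv\mathbf{0}$, each iterate $y^n$ defined by (\ref{eq:B.1}) stays in $C^K$ because the map $L$ preserves continuity, the Cauchy estimate gives convergence in $\|\cdot\|_{T,K}$ (i.e., uniformly on compacts), and the uniform limit of continuous functions is continuous; uniqueness then identifies this continuous limit with $f_3(\xi,\zeta)$. You instead use only the \emph{statement} of Lemma~\ref{Lemma:A.1}, not its proof mechanism: taking the unique c\`adl\`ag solution $y$, you observe that $\int_0^\cdot y_l(s)\,ds$ is continuous merely because c\`adl\`ag functions are locally bounded, that $\psi$ maps $C$ into $C$, and then read (\ref{eq:A.1}) as an identity to upgrade $y$ from c\`adl\`ag to continuous in a single bootstrap step --- and, as you correctly note, there is no circularity in this. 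Your argument is shorter and arguably cleaner: it makes transparent that any discontinuity of $f_3(\xi,\zeta)$ would have to be inherited directly from $\zeta$ (or from $\xi$ through $\psi$), and it avoids re-examining the iteration. What the paper's route buys is economy of exposition given the machinery already built (the proof is one sentence pointing back at Appendix~\ref{appendix:C}), at the cost of implicitly invoking that $C^K_T$ is closed in $(D^K_T,\|\cdot\|_{T,K})$. For $f_1$ and $f_2$ the two arguments coincide: with $y=f_3(\xi,\zeta)$ continuous, the arguments $z_j$ of $\phi$ and $\psi$ in (\ref{eq:A.11})--(\ref{eq:A.12}) are continuous, and continuity preservation of the one-sided reflection map does the rest. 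Your closing justification that $\psi$ preserves continuity (running supremum of the continuous function $[-x]^+$) is the right and standard one.
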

\begin{proof}
    Following the same argument as in the proof of Lemma~\ref{Lemma:A.1}, but now with $(\xi,\zeta)\in C^{J+K}$, we can construct a sequence $\{y^n:n=0,1,\dots\}$ in $C^K$ (via the method of successive approximations) whose limit $y\in C^K$ is the unique solution to (\ref{eq:A.1}). It follows that $f_3(C^{J+K})\subseteq C^{K}$. It then follows from (\ref{eq:A.11}) and (\ref{eq:A.12}), together with the fact that the operations involved preserve continuity, that $f_1(C^{J+K})\subseteq C^{J}$ and $f_2(C^{J+K})\subseteq C^{J}$, respectively. Therefore, $f(C^{J+K})\subseteq C^{2J+K}$, as desired.
\end{proof}

\section{Main Convergence Results}\label{Section:5}
This section contains the main convergence results of this paper, culminating with a proof of Theorem~\ref{Theorem:Main_Result}. In Section~\ref{Section:5.1}, we prove convergence of the fluid scaled processes. (These results are necessary because several of the fluid scaled processes serve as random time changes in the diffusion-scaled equations.) In Section~\ref{Section:5.2}, we prove convergence of the process $(\hat{\xi}^n, \hat{\zeta}^n)$. This, combined with a continuous mapping argument, allows us to complete the proof of Theorem~\ref{Theorem:Main_Result}.

\subsection{Convergence of Fluid Scaled Processes}\label{Section:5.1}

We begin by establishing weak convergence of the fluid-scaled processes.

\begin{lemma}\label{Lemma:5.1}
	As $n\rightarrow\infty$, $(\bar{\xi}^n, \bar{\zeta}^n)\Rightarrow\mathbf{0}\in D^{J+K}$.
\end{lemma}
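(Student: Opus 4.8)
The plan is to exploit the defining relations $\bar\xi^n = n^{-1/2}\hat\xi^n$ and $\bar\zeta^n = n^{-1/2}\hat\zeta^n$ from (\ref{eq:Fluid_Free_Processes}): I will show that the diffusion-scaled free processes are stochastically bounded uniformly on each compact interval, i.e.\ $\|\hat\xi^n\|_{T,J}\vee\|\hat\zeta^n\|_{T,K}=O_p(1)$ for every fixed $T>0$, so that multiplying by $n^{-1/2}$ forces $\|\bar\xi^n\|_{T,J}\to 0$ and $\|\bar\zeta^n\|_{T,K}\to 0$ in probability. Since uniform convergence to the (continuous) zero function on every compact interval implies $J_1$-convergence in $D^{J+K}$, and since convergence in probability to a deterministic limit implies weak convergence, this yields $(\bar\xi^n,\bar\zeta^n)\Rightarrow\mathbf 0$.

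To carry this out, I first record two a priori bounds on the time-change arguments appearing in (\ref{eq:3.18})--(\ref{eq:3.19}). From $T_j^n(t)=t-I_j^n(t)$ and (\ref{eq:2.19})--(\ref{eq:2.20}) we have $0\le \mu_j T_j^n(t)\le \mu_j T$ on $[0,T]$. From the conservation identity (\ref{eq:2.16}) together with Assumption~\ref{Assumption_Initial_Conditions}, the total number of jobs equals $\sum_j Q_j^n(0)+\sum_k V_k^n(0)$ for all $t$, and dividing by $n$ gives $C_n:=n^{-1}\big(\sum_j Q_j^n(0)+\sum_k V_k^n(0)\big)\Rightarrow \sum_k m_k=1$ by (\ref{HT2}); in particular $C_n=O_p(1)$. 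Since $0\le V_k^n(t)\le nC_n$ for all $t$, we obtain $0\le\rttensortwo{V}_k^n(t)\le C_n$ and hence $0\le \eta_k\int_0^t\rttensortwo{V}_k^n(s)\,ds\le \eta_k C_n T=:R_n=O_p(1)$ on $[0,T]$.

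Next I would invoke the standard functional limit theorems for the primitives. By Donsker's theorem the centered, diffusion-scaled Poisson and routing processes $\hat{N}^n_j$, $\hat{M}^n_k$, $\hat{\Phi}^n_{jk}$, $\hat{\Psi}^n_{kj}$ each converge weakly to Brownian motions; in particular each sequence is tight in $D$ with continuous limit and is therefore stochastically bounded in the uniform norm on any bounded interval. By the functional law of large numbers the fluid-scaled processes $\bar{N}^n_j$ and $\bar{M}^n_k$ converge uniformly on compacts to the identity $e$. The four terms constituting $\hat\xi_j^n$ and $\hat\zeta_k^n$ are then handled by a composition argument: each is a tight diffusion-scaled process evaluated at a random time-change argument that is bounded on $[0,T]$ either by the deterministic constant $\mu_j T$ or by the $O_p(1)$ quantity $R_n$. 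The elementary fact that stochastic boundedness is preserved under an $O_p(1)$ time change --- for any $\ep>0$ pick $a$ with $P(R_n>a)<\ep$ for all large $n$, then $b$ with $P(\sup_{u\le a}|\hat{M}^n_k(u)|>b)<\ep$, so the composed term exceeds $b$ with probability $<2\ep$ --- shows each composed term is $O_p(1)$ in $\|\cdot\|_T$; the nested terms $\hat{\Psi}^n_{kj}\big(\bar{M}^n_k(\cdots)\big)$ and $\hat{\Phi}^n_{jk}\big(\bar{N}^n_j(\cdots)\big)$ are treated by first noting that the inner fluid composition is itself $O_p(1)$ via $\bar{M}^n_k,\bar{N}^n_j\Rightarrow e$ evaluated at an argument $\le R_n$. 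Adding the $O_p(1)$ initial-condition terms $\hat{Q}_j^n(0),\hat{V}_k^n(0)$ (Assumption~\ref{Assumption_Initial_Conditions}) yields $\|\hat\xi^n\|_{T,J},\|\hat\zeta^n\|_{T,K}=O_p(1)$.

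The main obstacle is precisely this composition step: the time-change arguments $\mu_j T_j^n$ and $\eta_k\int_0^{\cdot}\rttensortwo{V}_k^n\,ds$ are endogenous random processes, so one cannot directly read off the supremum of the outer process over a fixed deterministic range. The key that unlocks it is that the conservation law (\ref{eq:2.16}) and the constraint $T_j^n(t)\le t$ furnish a priori bounds (uniform in $t$) on these arguments, reducing the problem to the outer sequences' suprema over an $O_p(1)$ range, where their tightness applies. Everything else --- the FCLT/FLLN inputs and the final $n^{-1/2}$ scaling --- is routine.
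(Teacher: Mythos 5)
Your proposal is correct and takes essentially the same route as the paper's proof: decompose $\hat{\xi}^n_j,\hat{\zeta}^n_k$ via the triangle inequality applied to (\ref{eq:3.18})--(\ref{eq:3.19}), use Donsker/FCLT tightness of the primitives composed with bounded time changes, scale by $n^{-1/2}$, and conclude via the fact that $\|\cdot\|_T$-convergence in probability to zero for every $T$ yields weak convergence to $\mathbf{0}$ in $D^{J+K}$ (the paper's Lemma~\ref{Lemma:C.1} together with \citet[Theorem 11.4.5]{Whitt_2002}). The only difference is a detail: the paper bounds the infinite-server time changes by the deterministic inequality $\int_0^t \rttensortwo{V}_k^n(s)\,ds\le t$, whereas you derive an $O_p(1)$ bound from the conservation identity (\ref{eq:2.16}) and handle the composition with a stochastic-boundedness argument --- which is, if anything, slightly more careful, since Assumption~\ref{Assumption_Initial_Conditions} only forces the total population to be $n(1+o_p(1))$ rather than at most $n$.
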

\begin{proof}
To prove that $(\bar{\xi}^n, \bar{\zeta}^n)\Rightarrow\mathbf{0}$ as $n\rightarrow\infty$, it suffices to show that $\bar{\xi}_j^n\Rightarrow 0$ as $n\rightarrow\infty$ for all $j\in [J]$ and $\bar{\xi}_k^n\Rightarrow 0$ as $n\rightarrow\infty$ for all $k\in[K]$; see, e.g., \citet[Theorem 11.4.5]{Whitt_2002}. In turn, it suffices to show that for all $T>0$, 
	\begin{align}
	\|\bar{\xi}_j^n\|_T\Rightarrow 0\quad\text{and}\quad \|\bar{\zeta}_k^n\|_T\Rightarrow 0\quad\text{as}\quad n\rightarrow\infty\label{eq:5.1}
	\end{align}
	for all $j\in [J]$ and $k\in [K]$; see Lemma \ref{Lemma:C.1} in Appendix \ref{appendix:D} for a proof of this claim. By (\ref{eq:3.18})--(\ref{eq:3.19}), the triangle inequality, and the fact that $\int_{0}^{t}\rttensortwo{V}_k^n(s)\,ds\le t$ and $T_j^n(t)\le t$ for all $t\ge 0$, it follows that for all $T>0$,
	\begin{align}
	\!\!\!\!\!\|\hat{\xi}_j^n\|_T&\,\le\, \|\hat{Q}_j^n(0) \|_T \,+ \sum_{k=1}^{K}\|\hat{\Psi}^n_{kj}\big(\bar{M}^n_k(\eta_k \halfspace\cdot )\big)\|_T \notag\\[-2pt]&\qquad+\, \|\hat{N}_j^n(\mu_j \halfspace\cdot)\|_T \,+ \sum_{k=1}^{K}\|\hat{M}_k^n(\eta_k\halfspace\cdot )\|_T,\label{eq:5.2}\\
	\!\!\!\!\!\|\hat{\zeta}_k^n\|_T&\,\le\, \|\hat{V}_k^n(0)\|_T \,+ \sum_{j=1}^{J}\|\hat{\Phi}_{jk}^n\big(\bar{N}_j^n(\mu_j\halfspace\cdot )\big)\|_T \notag\\[-2pt]&\qquad +\, \|\hat{M}_k^n(\eta_k\halfspace\cdot)\|_T \,+ \sum_{j=1}^{J}\|\hat{N}_j^n(\mu_j\halfspace\cdot)\|_T.\label{eq:5.3}
	\end{align}
	By Donsker's theorem, the functional central limit theorem for renewal processes, the random time change theorem, and the continuous mapping theorem, it is straightforward to show that the right-hand sides of (\ref{eq:5.2}) and (\ref{eq:5.3}) converge weakly to nondegenerate limits; see, e.g., \citet{Billingsley_1999} and \citet{Glynn_1990}. By this and the fact that $\bar{\xi}_j^n=n^{-1/2}\hat{\xi}_j^n$ and $\bar{\zeta}_k^n = n^{-1/2}\hat{\zeta}_k^n$, we obtain (\ref{eq:5.1}). This is a standard argument, so the detailed proof is omitted.\end{proof}

\begin{lemma}\label{Lemma:5.2}
	As $n\rightarrow\infty$, $(\bar{Q}^n, I^n, \bar{V}^n)\Rightarrow\mathbf{0}\in D^{J+K}$.
\end{lemma}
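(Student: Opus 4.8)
The plan is to recognize the fluid equations (\ref{eq:3.22})--(\ref{eq:3.23}) as an instance of the regulator map $f=(f_1,f_2,f_3)$ from Corollary~\ref{Cor:4.2}, and then push the convergence $(\bar{\xi}^n,\bar{\zeta}^n)\Rightarrow\mathbf{0}$ from Lemma~\ref{Lemma:5.1} through the continuity established in Proposition~\ref{Prop:4.3}. Concretely, I would first argue that, pathwise on the full-measure set, $(\bar{Q}^n,I^n,\bar{V}^n)=f(\bar{\xi}^n,\bar{\zeta}^n)$. The key observation is that the scaled work-conservation constraints (\ref{eq:2.17})--(\ref{eq:2.20}) together with the nonnegativity $\bar{Q}^n_j\ge 0$ say precisely that, for each $j\in[J]$, the pair $(\bar{Q}^n_j,\mu_j I^n_j)$ solves the one-sided reflection problem for the net-input path $z_j^n:=\bar{\xi}^n_j+\sum_{l=1}^{K}q_{lj}\eta_l\int_0^{\cdot} \bar{V}^n_l(s)\,ds$. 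Hence, by uniqueness of the reflection map (\ref{eq:D.2})--(\ref{eq:D.3}), we get $\mu_j I^n_j=\psi(z^n_j)$ and $\bar{Q}^n_j=\phi(z^n_j)$, exactly mirroring (\ref{eq:A.2})--(\ref{eq:A.3}).

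Substituting $\mu_j I^n_j=\psi(z^n_j)$ into (\ref{eq:3.23}) shows that $\bar{V}^n$ satisfies the fixed-point equation (\ref{eq:A.1}) with data $(\bar{\xi}^n,\bar{\zeta}^n)$; by the uniqueness in Lemma~\ref{Lemma:A.1} this gives $\bar{V}^n=f_3(\bar{\xi}^n,\bar{\zeta}^n)$, and then the formulas (\ref{eq:A.11})--(\ref{eq:A.12}) yield $I^n=f_2(\bar{\xi}^n,\bar{\zeta}^n)$ and $\bar{Q}^n=f_1(\bar{\xi}^n,\bar{\zeta}^n)$. With the identification $(\bar{Q}^n,I^n,\bar{V}^n)=f(\bar{\xi}^n,\bar{\zeta}^n)$ in hand, Lemma~\ref{Lemma:5.1} gives $(\bar{\xi}^n,\bar{\zeta}^n)\Rightarrow\mathbf{0}$ in $D^{J+K}$, and since Proposition~\ref{Prop:4.3} shows $f$ is continuous everywhere (so $\mathbf{0}$ is trivially a continuity point), the continuous mapping theorem yields $f(\bar{\xi}^n,\bar{\zeta}^n)\Rightarrow f(\mathbf{0})$. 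It then remains to evaluate $f(\mathbf{0})$: the identically zero path solves (\ref{eq:A.1}) for zero data because $\psi(\mathbf{0})=\phi(\mathbf{0})=\mathbf{0}$, so uniqueness gives $f_3(\mathbf{0})=\mathbf{0}$ and hence $f(\mathbf{0})=\mathbf{0}\in D^{2J+K}$. This delivers $(\bar{Q}^n,I^n,\bar{V}^n)\Rightarrow\mathbf{0}$.

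The step I expect to require the most care is the pathwise identification $(\bar{Q}^n,I^n,\bar{V}^n)=f(\bar{\xi}^n,\bar{\zeta}^n)$, rather than the convergence itself. In particular, the inputs $(\bar{\xi}^n,\bar{\zeta}^n)$ do \emph{not} exactly satisfy the balance condition (\ref{eq:4.1}): summing (\ref{eq:3.22})--(\ref{eq:3.23}) and using the conservation identity (\ref{eq:2.16}) together with the stochasticity of the matrices in (\ref{Equation:Stochastic_Matrix}) shows only that $\sum_j\bar{\xi}^n_j+\sum_k\bar{\zeta}^n_k$ equals a constant $c^n$ that vanishes in the limit by Assumptions~\ref{Assumption:Heavy_Traffic} and~\ref{Assumption_Initial_Conditions}. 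Consequently one cannot invoke Proposition~\ref{Prop:4.1} directly; instead I would rely on the fact that the map $f$ built from (\ref{eq:A.11})--(\ref{eq:A.12}) and Lemma~\ref{Lemma:A.1} is defined on all of $D^{J+K}$ and needs only the reflection and complementarity structure (\ref{eq:4.3})--(\ref{eq:4.4}), (\ref{eq:4.6})--(\ref{eq:4.7})---but not (\ref{eq:4.1})---to coincide with $(\bar{Q}^n,I^n,\bar{V}^n)$. As a robustness check, the same conclusion can be reached without the continuous mapping machinery by a direct Gronwall argument: the Lipschitz bound (\ref{eq:D.4}) gives $|\psi(z^n_j)(t)|\le\|\bar{\xi}^n\|_T+\eta\sum_l\int_0^t|\bar{V}^n_l(s)|\,ds$, which fed into (\ref{eq:3.23}) produces a Gronwall inequality bounding $\|\bar{V}^n\|_T$, and then $\|I^n\|_T$ and $\|\bar{Q}^n\|_T$ via (\ref{eq:D.4})--(\ref{eq:D.5}), by a constant multiple of $\|\bar{\xi}^n\|_T+\|\bar{\zeta}^n\|_T$, which tends to $0$ by Lemma~\ref{Lemma:5.1}.
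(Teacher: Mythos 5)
Your proposal is correct and follows essentially the same route as the paper: identify $(\bar{Q}^n, I^n, \bar{V}^n) = f(\bar{\xi}^n, \bar{\zeta}^n)$ pathwise, push Lemma~\ref{Lemma:5.1} through Proposition~\ref{Prop:4.3} via the continuous mapping theorem, and check $f(\mathbf{0})=\mathbf{0}$. Two variations in your write-up deserve comment. First, you evaluate $f_3(\mathbf{0})$ by noting that the zero path satisfies (\ref{eq:A.1}) with zero data and invoking the uniqueness in Lemma~\ref{Lemma:A.1}; the paper instead re-derives this with an explicit Gronwall estimate, namely (\ref{eq:5.12}). Your shortcut is legitimate and arguably cleaner, since Lemma~\ref{Lemma:A.1} already encodes the needed uniqueness. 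Second, your claim that (\ref{eq:4.1}) fails exactly for $(\bar{\xi}^n,\bar{\zeta}^n)$---so that Proposition~\ref{Prop:4.1} ``cannot be invoked directly''---is not how the paper reads: under the paper's normalization the $n$th network carries exactly $n$ jobs, so by (\ref{HT2}) one has $\sum_j \hat{Q}^n_j(0) + \sum_k \hat{V}^n_k(0) = 0$, and since the routing sums telescope exactly ($\sum_j \Psi_{kj}(m) = m$ and $\sum_k \Phi_{jk}(m) = m$, evaluated at integer arguments) while the compensated Poisson terms cancel by stochasticity of $P$ and $Q$, the balance (\ref{Equation:Fluid_Sum_Zero}) holds exactly and Proposition~\ref{Prop:4.1} applies directly. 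Your constant $c^n$ is precisely $n^{-1}$ times the excess of the total job count over $n$, which vanishes identically under that convention. That said, your workaround is sound and strengthens the argument: the identification indeed needs only the reflection and complementarity structure, because the uniqueness argument behind Proposition~\ref{Prop:4.1}---and the definition of $f$ via Lemma~\ref{Lemma:A.1} and (\ref{eq:A.11})--(\ref{eq:A.12})---never uses (\ref{eq:4.1}) or (\ref{eq:4.5}), so your proof remains valid even for initial populations of size $n + O(\sqrt{n})$. Your backup Gronwall bound also works and would bypass Proposition~\ref{Prop:4.3} entirely, at the cost of redoing estimates the paper delegates to the continuity of $f$.
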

\begin{proof} 
	It is straightforward to show that the process $(\bar{\xi}^n, \bar{\zeta}^n)$ defined by (\ref{eq:Fluid_Free_Processes}) satisfies 
    \begin{alignat}{3}
	&\!\!\!\!\!\!\!\bar{\xi}^n_j(0) \,=\, \bar{Q}^n_j(0)\,\ge\, 0,&&\quad\text{for all } j\in [J],\label{Equation:Fluid_Xi_Nonnegative}\\
	&\!\!\!\!\!\!\!\sum_{j=1}^{J}\bar{\xi}_j^n(t) \,+ \sum_{k=1}^{K}\bar{\zeta}_k^n(t)\,=\,0,&&\quad\text{for all } t\ge 0.\label{Equation:Fluid_Sum_Zero}
	\end{alignat}
	Furthermore, by (\ref{eq:2.17})--(\ref{eq:2.19}) and (\ref{eq:3.9}), $I^n$ is nondecreasing componentwise with $I^n(0)=0$ and satisfies
	\begin{align}
	\!\!\!\!\!\!\!\!\!\!\!\!\!\int_{0}^{\infty}\!\!\mathds{1}_{\{\bar{Q}^n_j(t)\,>\,0\}}\,dI^n_j(t) &\,= \!\int_{0}^{\infty}\!\!\mathds{1}_{\{{Q}^n_j(t)\,>\,0\}}\,d{I}^n_j(t)\,=\,0.\label{eq:5.7}
	\end{align}
	Since $(\bar{\xi}^n, \bar{\zeta}^n)\in D^{J+K}$ pathwise, it follows from (\ref{eq:3.22})--(\ref{eq:3.23}), (\ref{Equation:Fluid_Xi_Nonnegative})--(\ref{eq:5.7}), and Proposition~\ref{Prop:4.1} that $(\bar{Q}^n, I^n,\bar{V}^n)=(f_1(\bar{\xi}^n, \bar{\zeta}^n), f_2(\bar{\xi}^n, \bar{\zeta}^n), f_3(\bar{\xi}^n, \bar{\zeta}^n))$. Then, by Proposition \ref{Prop:4.3}, Lemma \ref{Lemma:5.1}, and the continuous mapping theorem, it follows that as $n\rightarrow\infty$, 
	\begin{align*}
	(\bar{Q}^n, I^n,\bar{V}^n) &\,=\, \big(f_1(\bar{\xi}^n, \bar{\zeta}^n), f_2(\bar{\xi}^n, \bar{\zeta}^n), f_3(\bar{\xi}^n, \bar{\zeta}^n)\big)\\&\,\Rightarrow\, \big(f_1(\mathbf{0}), f_2(\mathbf{0}), f_3(\mathbf{0})\big)
	\end{align*}
	It now suffices to show that $\bar{V}\coloneqq f_3(\mathbf{0})=\mathbf{0}$, for then it follows from (\ref{eq:A.11})--(\ref{eq:A.12}) that $f_1(\mathbf{0})=\mathbf{0}$ and $f_2(\mathbf{0})=\mathbf{0}$. To that end, it follows from (\ref{eq:A.1}), the definition of $\bar{V}$, and the triangle inequality that, for any fixed $T>0$, all $t\in [0,T]$, and all $k\in [K]$, we have
	\begin{align*}
	\!\!\!\!\!\!\!\!\!\|\bar{V}_k\|_t
	&\,\le\, \eta\!\int_{0}^{t}\!\|\bar{V}_k\|_s\,ds \,+ \sum_{j=1}^{J}\sum_{l=1}^{K}\eta_l\Big\|\int_{0}^{\cdot}\!\bar{V}_l(s)\,ds\Big\|_t\notag\\
	&\,\le\, 2\eta JK\!\!\int_{0}^{t}\!\!\max_{k\in [K]}\|\bar{V}_k\|_s\,ds.
	\end{align*}
    Therefore, it follows that
	\begin{align}
	\max_{k\in [K]}\|\bar{V}_k\|_t\,\le\, 2\eta JK\!\!\int_{0}^{t}\!\!\max_{k\in [K]}\|\bar{V}_k\|_s\,ds.\label{eq:5.12}
	\end{align}
	By Gronwall's inequality (see, e.g., \citet[Lemma 4.1]{Pang_Talreja_Whitt_2007}) and (\ref{eq:5.12}), it follows that $\max_{k\in [K]}\|\bar{V}_k\|_T=0.$
	Since $T$ was arbitrary, it follows that $\bar{V}\equiv 0$.
    \end{proof}

\begin{corollary}\label{Cor:5.3}
	As $n\rightarrow\infty$, $T^n\Rightarrow e\in C^J$, where $e(t) \coloneqq  (t,\dots, t)$ \,for\, $t\ge 0$.
\end{corollary}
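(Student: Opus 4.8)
The plan is to read off the corollary from the elementary relation between the busy-time and idleness processes together with Lemma~\ref{Lemma:5.2}. By~(\ref{eq:2.15}) we have $I_j^n(t)=t-T_j^n(t)$ for every $j\in[J]$ and $t\ge 0$, so in vector form $T^n=e-I^n$, where $e(t)=(t,\dots,t)$. Lemma~\ref{Lemma:5.2} gives $I^n\Rightarrow\mathbf{0}$ in $D^J$ (by projecting off the $\bar{Q}^n$ and $\bar{V}^n$ coordinates, which is a continuous operation), so the whole task reduces to transferring this convergence through the affine map $x\mapsto e-x$.

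The cleanest self-contained route is to exploit that the target $e$ is both deterministic and continuous: weak convergence in $D^J$ to such a limit is equivalent to convergence in probability, uniformly on compact time intervals. Hence it suffices to show $\|T^n-e\|_T\Rightarrow 0$ for each fixed $T>0$. But $\|T^n-e\|_T=\|I^n\|_T$ by the identity above, and $\|I^n\|_T\Rightarrow 0$ is precisely the content of $I^n\Rightarrow\mathbf{0}$ (again using that $\mathbf{0}$ is continuous and deterministic). This yields $T^n\Rightarrow e$, and since $e\in C^J$ the convergence takes place in $C^J$.

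Equivalently, one can invoke the continuous mapping theorem directly: the map $x\mapsto e-x$ is continuous on $D^J$ in the $J_1$ topology (negation is a $J_1$-homeomorphism, and adding the fixed continuous function $e$ is $J_1$-continuous; see, e.g., \citet{Whitt_2002}), so Lemma~\ref{Lemma:5.2} immediately gives $T^n=e-I^n\Rightarrow e-\mathbf{0}=e$. I do not anticipate a genuine obstacle: the result is essentially a one-line consequence of Lemma~\ref{Lemma:5.2} and the relation $T^n=e-I^n$. The only point requiring mild care is the continuity of subtraction in the $J_1$ topology, but this is harmless here because $I^n$ is continuous by~(\ref{eq:2.19}) and $e$ is continuous, ruling out any coinciding-jump pathologies. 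I would therefore present the first, self-contained route as the main argument.
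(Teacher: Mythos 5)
Your proposal is correct and follows essentially the same route as the paper, which likewise deduces the corollary in one line from the identity $T^n = e - I^n$ in (\ref{eq:2.15}) together with $I^n \Rightarrow \mathbf{0}$ from Lemma~\ref{Lemma:5.2}. The extra care you take in justifying the transfer step (continuity of $x \mapsto e - x$ in $J_1$, or equivalently reduction to $\|I^n\|_T \Rightarrow 0$ since the limit is deterministic and continuous) is sound but is left implicit in the paper's proof.
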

\begin{proof} By the definition in (\ref{eq:2.15}), $T^n = e - I^n$. The result then follows by Lemma \ref{Lemma:5.2} since $I^n\Rightarrow \mathbf{0}$ as $n\rightarrow\infty$.\end{proof}

\subsection{Convergence of Diffusion Scaled Processes}\label{Section:5.2}

The next result establishes weak convergence of the diffusion-scaled ``primitive'' processes $\hat{\xi}^n$ and $\hat{\zeta}^n$:
\begin{lemma}\label{Lemma:5.4}
	As $n\rightarrow\infty$, $(\hat{\xi}^n, \hat{\zeta}^n)\Rightarrow (\xi^*, \zeta^*)$, where $(\xi^*, \zeta^*)$ is $(J+K)$-dimensional Brownian motion with initial state $(Q(0),V(0))$ and covariance matrix $\Sigma$ given by (\ref{eq:3.29*})--(\ref{eq:3.33*}).
\end{lemma}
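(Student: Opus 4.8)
The plan is to combine a joint functional central limit theorem for the network's driving processes with the fluid-scale limits established in Section~\ref{Section:5.1}, and then to pass to the limit in the representations (\ref{eq:3.18})--(\ref{eq:3.19}) via the random time change and continuous mapping theorems. First I would establish joint convergence of the diffusion-scaled primitives,
\begin{align*}
(\hat{N}^n, \hat{M}^n, \hat{\Phi}^n, \hat{\Psi}^n)\,\Rightarrow\,(\hat{N}^*, \hat{M}^*, \hat{\Phi}^*, \hat{\Psi}^*),
\end{align*}
where the limit is a vector of mutually independent Brownian motions. Convergence of $\hat{N}^n_j$ and $\hat{M}^n_k$ is Donsker's theorem for the centered, scaled unit-rate Poisson processes, yielding standard Brownian motions; convergence of the cumulative routing processes $\hat{\Phi}^n_{jk}$ and $\hat{\Psi}^n_{kj}$ is the multivariate Donsker theorem applied to the i.i.d.\ unit-vector summands, so that for each fixed $j$ the limit $(\hat{\Phi}^*_{j1},\dots,\hat{\Phi}^*_{jK})$ is a Brownian motion with the multinomial covariance rate $p_{jk}(\delta_{kl}-p_{jl})$, and similarly for $\hat{\Psi}^*$. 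Independence across the four families, and hence of the joint limit, follows from the assumed mutual independence of the primitives.

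Next I would pin down the random time changes appearing in (\ref{eq:3.18})--(\ref{eq:3.19}). From Lemma~\ref{Lemma:5.2} we have $\bar{V}^n_k\Rightarrow 0$, so by (\ref{eq:3.10}) the process $\rttensortwo{V}^n_k = m_k + \bar{V}^n_k\Rightarrow m_k$ and $\eta_k\int_0^{\cdot}\rttensortwo{V}^n_k(s)\,ds\Rightarrow \eta_k m_k e$ uniformly on compact sets; composing with the functional law of large numbers $\bar{M}^n_k\Rightarrow e$ gives $\bar{M}^n_k(\eta_k\int_0^{\cdot}\rttensortwo{V}^n_k\,ds)\Rightarrow \eta_k m_k e$. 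Likewise, Corollary~\ref{Cor:5.3} gives $T^n_j\Rightarrow e$, whence $\bar{N}^n_j(\mu_j T^n_j)\Rightarrow \mu_j e$. Because all these limits are deterministic and continuous, a converging-together argument (see, e.g., \citet[Theorem 11.4.5]{Whitt_2002}) upgrades the marginal convergences to joint convergence of the primitives together with all of their time-change arguments. The random time change theorem (\citet{Billingsley_1999}; \citet[Section 13.2]{Whitt_2002}) then yields, jointly,
\begin{align*}
\hat{\Psi}^n_{kj}\big(\bar{M}^n_k(\eta_k\!\int_0^{\cdot}\!\rttensortwo{V}^n_k\,ds)\big)\Rightarrow \hat{\Psi}^*_{kj}(\eta_k m_k\,\cdot),\qquad \hat{N}^n_j(\mu_j T^n_j)\Rightarrow \hat{N}^*_j(\mu_j\,\cdot),
\end{align*}
and analogously for the remaining composed terms, the limits being a.s.\ continuous since the outer Brownian limits have continuous paths and the inner time changes are continuous and deterministic.

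I would then assemble the pieces. Adding the jointly convergent composed terms, invoking Assumption~\ref{Assumption_Initial_Conditions} for $(\hat{Q}^n(0),\hat{V}^n(0))$, and using that addition is continuous at limit points with continuous paths, the continuous mapping theorem gives $(\hat{\xi}^n,\hat{\zeta}^n)\Rightarrow(\xi^*,\zeta^*)$, where
\begin{align*}
\xi^*_j = Q_j(0) + \sum_{k=1}^{K}\hat{\Psi}^*_{kj}(\eta_k m_k\,\cdot) - \hat{N}^*_j(\mu_j\,\cdot) + \sum_{k=1}^{K} q_{kj}\hat{M}^*_k(\eta_k m_k\,\cdot),
\end{align*}
and $\zeta^*_k$ is the analogous expression. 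Each coordinate is a sum of independent, linearly time-changed Brownian motions, hence Brownian, and the covariance matrix is identified by computing the per-unit-time second moments of these increments, using that each $\hat{N}^*_j$ and $\hat{M}^*_k$ has unit variance rate and that the routing limits carry the multinomial covariance structure; substituting $\eta_k m_k=\sum_i\mu_i p_{ik}$ then yields the entries (\ref{eq:3.29*})--(\ref{eq:3.33*}) of $\Sigma$.

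The main obstacle is the composition with the random, \emph{state-dependent} time changes $\rttensortwo{V}^n$ and $T^n$: these are not independent of the primitives, so one cannot simply take products of marginal limits. The resolution is that Lemma~\ref{Lemma:5.2} and Corollary~\ref{Cor:5.3} force those time changes to converge to deterministic, continuous limits, which is exactly the hypothesis under which the random time change theorem applies and joint convergence follows from the separate marginal convergences. A secondary point requiring care is the covariance bookkeeping for the off-diagonal entries, where the routing fluctuations $\hat{\Psi}^*_{k\cdot}$ (resp.\ $\hat{\Phi}^*_{\cdot k}$) and the shared service-noise terms $\hat{M}^*_k$ (resp.\ $\hat{N}^*_j$) that are split across destination stations must be combined with the correct signs.
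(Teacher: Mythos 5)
Your proposal follows the same route as the paper's proof of Lemma \ref{Lemma:5.4}: marginal Donsker/FCLT limits for the primitives, the fluid limits of Lemma \ref{Lemma:5.2} and Corollary \ref{Cor:5.3} to force the state-dependent time changes in (\ref{eq:3.18})--(\ref{eq:3.19}) to converge to deterministic continuous limits, joint convergence via independence of the primitives and \citet[Theorems 11.4.4 and 11.4.5]{Whitt_2002}, and then the random time change and continuous mapping theorems. Structurally nothing is missing, and your diagnosis of the main obstacle (the time changes are not independent of the primitives, but their deterministic limits make the composition argument go through) is exactly the paper's logic.

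The one substantive problem is the final covariance identification, which the paper omits as ``routine'' but which you assert explicitly. You correctly record the multinomial covariance rate $p_{jk}(\delta_{kl}-p_{jl})$ for $(\hat{\Phi}^*_{j1},\dots,\hat{\Phi}^*_{jK})$ (and analogously $q_{kj}(\delta_{ji}-q_{ki})$ for $\hat{\Psi}^*_{k\cdot}$), but this structure does \emph{not} yield the stated entries (\ref{eq:3.29*})--(\ref{eq:3.33*}). Carry the computation through for $i\ne j\in[J]$: the routing terms in $\xi^*_i=Q_i(0)+\sum_k\hat{\Psi}^*_{ki}(\eta_k m_k\,\cdot)-\hat{N}^*_i(\mu_i\,\cdot)+\sum_k q_{ki}\hat{M}^*_k(\eta_k m_k\,\cdot)$ contribute $-\sum_k q_{ki}q_{kj}\eta_k m_k$ to $\mathrm{Cov}(\xi^*_i(1),\xi^*_j(1))$, which exactly cancels the contribution $+\sum_k q_{ki}q_{kj}\eta_k m_k$ from the shared $\hat{M}^*_k$ terms, giving zero rather than the paper's $\Sigma_{i,j}=\sum_k q_{ki}q_{kj}\eta_k m_k$; the same cancellation between the $\hat{\Phi}^*$ cross terms and the shared $\hat{N}^*_j$ terms gives zero in place of $\Sigma_{J+l,J+k}=\sum_j p_{jl}p_{jk}\mu_j$. (This cancellation is the functional analogue of the independence of thinned Poisson streams.) The paper's stated off-diagonal entries are what one obtains by treating the components $B_{kj}$ across $j$ (for fixed $k$) as independent---i.e., by dropping the multinomial cross terms---and your multinomial bookkeeping therefore contradicts your own concluding claim of agreement. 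A consistency check favors your covariance structure: since $\sum_j\xi^*_j(t)+\sum_k\zeta^*_k(t)=0$ by (\ref{eq:5.26}), one needs $\Sigma\mathbf{1}=0$; this holds when the within-block off-diagonals vanish (the $j$th row sum is $2\mu_j-\mu_j-\sum_k q_{kj}\eta_k m_k=0$ by (\ref{HT1})), but fails for the entries as displayed in the paper, whose $j$th row sum equals $\sum_{i\ne j}\sum_k q_{ki}q_{kj}\eta_k m_k>0$ in general. So do not assert that the computation ``yields (\ref{eq:3.29*})--(\ref{eq:3.33*})'': either carry it out explicitly and report the cancelled entries, or flag the discrepancy with the stated $\Sigma$; as written, your last step is internally inconsistent.
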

\begin{proof}
By Lemma~\ref{Lemma:5.2}, Corollary~\ref{Cor:5.3}, Donsker's theorem, the functional central limit theorem for renewal processes, and the continuous mapping theorem, we get weak convergence as $n\rightarrow\infty$ for the following fluid-scaled processes $j\in [J]$ and $k\in [K]$: 
\begin{align*}
    \!\!\!\!\!\!\!\!\!\!\!\!\bar{M}_k^n(\eta_k\halfspace\cdot)\Rightarrow \eta_k e,\,\,\,\bar{N}_j^n(\mu_j\halfspace\cdot)\Rightarrow \mu_j e,\,\,\,T_j^n\Rightarrow e,\,\,\, \rttensortwo{V}_k^n\Rightarrow m_k,
\end{align*}
where $e:[0,\infty)\rightarrow [0,\infty)$ denote the one-dimensional identity map $e(t)=t$ for $t\ge 0$. Similarly, we get weak convergence as $n\rightarrow\infty$ for the following diffusion-scaled processes for $j\in [J]$ and $k\in [K]$:
\begin{align*}
    \!\!\!\!\!\!\!\!\!\!\!\!&\hat{\Psi}_{kj}^n\Rightarrow \sqrt{q_{kj}(1-q_{kj})}\,B_{kj},\,\,\, \hat{\Phi}_{kj}^n\Rightarrow \sqrt{p_{jk}(1-p_{jk})}\,\tilde{B}_{jk},\\
    \!\!\!\!\!\!\!\!\!\!\!\!&\hat{M}_k^n(\eta_k\halfspace\cdot)\,\Rightarrow \sqrt{\eta_k}\,B_k,\,\,\,\,\,\,\,\,\,\,\,\,\,\,\,\,\hat{N}_j^n(\mu_j\halfspace\cdot)\,\Rightarrow \sqrt{\mu_j}\,\tilde{B}_j,
\end{align*}
where $B_{kj}$, $\tilde{B}_{jk}$, $B_k$, and $\tilde{B}_j$ are independent standard Brownian motions. (These convergence arguments are routine and therefore omitted for brevity.) Furthermore, the mapping $H:D\rightarrow D$, defined by $H(x)(t) \coloneqq \int_{0}^{t}\!x(s)\,ds$ for $(x,t)\in D\times[0,\infty)$, is continuous in the Skorokhod $J_1$ topology (see, e.g., \citet[page 229]{Pang_Talreja_Whitt_2007}), which implies that 
	\begin{align*}
	H(\rttensortwo{V}_k^n)\,\Rightarrow\, H(m_k)\,=\,m_k e\quad \text{as}\quad n\rightarrow\infty.
	\end{align*}
	By the above weak convergence results, \citet[Theorems 11.4.4 and 11.4.5]{Whitt_2002}, Assumption \ref{Assumption_Initial_Conditions}, and the independence of the stochastic model primitives, it follows that the (joint) processes $(\hat{Q}^n(0),\,\hat{V}^n(0),\,\hat{\Psi}^n,\,\hat{\Phi}^n,\,
	\hat{N}^n,\dots,\, \hat{M}^n)$ and $(T^n,\,\rttensortwo{V}^n,\,\bar{N}^n,\,\bar{M}^n)$
	converge weakly as $n\rightarrow\infty$ to their appropriate limits. From this, (\ref{eq:3.18})--(\ref{eq:3.19}), the random time change theorem, and the continuous mapping theorem, it follows that $(\hat{\xi}^n,\hat{\zeta}^n)$ converges weakly as $n\rightarrow\infty$ to $(J+K)$-dimensional Brownian motion $(\xi^*,\zeta^*)$ with initial state $(Q(0), V(0))$ and covariance matrix $\Sigma$ given by (\ref{eq:3.29*})--(\ref{eq:3.33*}). Because it is straightforward, albeit tedious, to derive the entries of the covariance matrix $\Sigma$, we omit the details.
    \end{proof}

\begin{proof}[Proof of Theorem \ref{Theorem:Main_Result}]
	It is straightforward to show that the process $(\hat{\xi}^n, \hat{\zeta}^n)$ defined by (\ref{eq:3.18})--(\ref{eq:3.19}) satisfies 
	\begin{alignat}{3}
	&\!\!\!\!\!\!\!\hat{\xi}^n_j(0) \,=\, \hat{Q}^n_j(0)\,\ge\, 0,&&\quad\text{for all } j\in [J],\label{eq:5.21}\\
	&\!\!\!\!\!\!\!\sum_{j=1}^{J}\hat{\xi}_j^n(t) \,+ \sum_{k=1}^{K}\hat{\zeta}_k^n(t)\,=\,0,&&\quad\text{for all } t\ge 0.\label{eq:5.22}
	\end{alignat}
	Moreover, as in the proof of Lemma \ref{Lemma:5.2}, 
    it is straightforward to show that $\hat{I}^n$ is nondecreasing componentwise with $\hat{I}^n(0)=0$ and satisfies
	\begin{align}
	\int_{0}^{\infty}\!\mathds{1}_{\{\hat{Q}_j^n(t)\, >\, 0\}}\,d\hat{I}^n_j(t)\,=\,0\quad\text{for all}\quad j\in[J].\label{eq:Diffusion_Idleness_Increases}
	\end{align}
	Since $(\hat{\xi}^n, \hat{\zeta}^n)\in D^{J+K}$ pathwise, it follows from (\ref{eq:3.20})--(\ref{eq:3.21}), (\ref{eq:5.21})--(\ref{eq:Diffusion_Idleness_Increases}), and Proposition~\ref{Prop:4.1} that $(\hat{Q}^n, \hat{I}^n,\hat{V}^n) = f(\hat{\xi}^n, \hat{\zeta}^n)$. Then, by Proposition~\ref{Prop:4.3}, Lemma~\ref{Lemma:5.4}, and the continuous mapping theorem, it follows that as $n\rightarrow\infty$,
	\begin{align*}
	(\hat{Q}^n, \hat{I}^n,\hat{V}^n) \,&=\, \big(f_1(\hat{\xi}^n, \hat{\zeta}^n), f_2(\hat{\xi}^n, \hat{\zeta}^n), f_3(\hat{\xi}^n, \hat{\zeta}^n)\big)\\\,&\Rightarrow\,(Q^*, I^*, V^*),
	\end{align*}
	where $Q^*\coloneqq f_1({\xi}^*, {\zeta}^*)$, $I^*\coloneqq f_2({\xi}^*, {\zeta}^*)$, and $V^*\coloneqq f_3({\xi}^*, {\zeta}^*)$. Moreover, since inequalities are preserved under weak convergence, Lemma \ref{Lemma:5.4} and (\ref{eq:5.21})--(\ref{eq:5.22}) imply that 
    \begin{alignat}{3}
	&\!\!\!\!\!\!\!\xi^*_j(0) \,=\, Q_j(0)\,\ge\, 0,&&\quad\text{for all } j\in [J],\label{eq:5.25}\\
	&\!\!\!\!\!\!\!\sum_{j=1}^{J}\xi_j^*(t) \,+ \sum_{k=1}^{K}\zeta_k^*(t)\,=\,0,&&\quad\text{for all } t\ge 0.\label{eq:5.26}
	\end{alignat}
    Therefore, since $(\xi^*,\zeta^*)\in C^{J+K}$ pathwise by Lemma \ref{Lemma:5.4}, it follows from (\ref{eq:5.25})--(\ref{eq:5.26}), Corollary \ref{Cor:4.2}, and Proposition~\ref{Prop:Preserve_Continuity} that $(Q^*, I^*, V^*)$ satisfies (\ref{eq:3.27})--(\ref{eq:3.29}) and has continuous sample paths. This completes the proof.
    \end{proof}



\begin{thebibliography}{99}

\bibitem[Alwan et~al. (2024)]{Alwan_et_al_2024} Alwan, A. A., Ata, B., Zhou, Y. (2024). A Queueing Model of Dynamic Pricing and Dispatch Control for Ride-Hailing Systems Incorporating Travel Times. \textit{Queueing Systems}, \textbf{106}(1--2):1--66.

\bibitem[Ata et~al. (2024a)]{Ata_Harrison_Si_2024} Ata, B., Harrison, J. M., Si, N. (2024). Singular Control of (Reflected) {Brownian} Motion: A Computational Method Suitable for Queueing Applications. \textit{Queueing Systems}, \textbf{108}(3):215--251.

\bibitem[Ata et~al. (2024b)]{Ata_Tongarlak_Lee_Field_2024} Ata, B., Tongarlak, M. H., Lee, D., Field, J. (2024). A Dynamic Model for Managing Volunteer Engagement. \textit{Operations Research}, \textbf{72}(5):1958--1975.

\bibitem[Ata et~al. (2025)]{Ata_Harrison_Si_2025} Ata, B., Harrison, J. M., Si, N. (2025). Drift Control of High-Dimensional {RBM}: A Computational Method Based on Neural Networks. \textit{Stochastic Systems}, \textbf{15}(2):111--146.


\bibitem[Ata and Xu (2025)]{Ata_Xu_2025} Ata, B., Xu, Y. (2025). Dynamic Control of Stochastic Matching Systems in Heavy Traffic: An Effective Computational Method for High-Dimensional Problems. Working Paper.


\bibitem[Billingsley (1999)]{Billingsley_1999} Billingsley, P. (1999). Convergence of Probability Measures. John Wiley \& Sons, New York.

\bibitem[Birman and Kogan (1992)]{Birman_Kogan_1992} Birman, A., Kogan, Y. (1992). Asymptotic Evaluation of Closed Queueing Networks with Many Stations. \textit{Communications in Statistics. Stochastic Models}, \textbf{8}(3):543--563.

\bibitem[Borovkov (1967)]{Borovkov_1967} Borovkov, A. A. (1967). On Limit Laws for Service Processes in Multi-Channel Systems. \textit{Sibirskii Matematicheskii Zhurnal}, \textbf{8}(5):983--1004.

\bibitem[Braverman et~al. (2019)]{Braverman_et_al_2019} Braverman, A., Dai, J. G., Liu, X., Ying, L. (2019). Empty-Car Routing in Ridesharing Systems. \textit{Operations Research}, \textbf{67}(5):1437--1452.

\bibitem[Chen and Mandelbaum (1991a)]{Chen_Mandelbaum_1991_Fluid} Chen, H., Mandelbaum, A. (1991). Discrete Flow Networks: Bottleneck Analysis and Fluid Approximations. \textit{Mathematics of Operations Research}, \textbf{16}(2):223--446.

\bibitem[Chen and Mandelbaum (1991b)]{Chen_Mandelbaum_1991_Diffusion} Chen, H., Mandelbaum, A. (1991). Stochastic Discrete Flow Networks: Diffusion Approximations and Bottlenecks. \textit{The Annals of Probability}, \textbf{19}(4):1463--1519.

\bibitem[Ethier and Kurtz (2005)]{Ethier_Kurtz_2005} Ethier, S., Kurtz, T. (2005). Markov Processes: Characterization and Convergence. John Wiley \& Sons, New York.

\bibitem[Glynn (1982)]{Glynn_1982} Glynn, P. W. (1982). On the {Markov} Property of the {$GI/G/\infty$} Limit. \textit{Advances in Applied Probability}, \textbf{14}(2):191--194.

\bibitem[Glynn (1990)]{Glynn_1990} Glynn, P. W. (1990). Chapter 4: Diffusion Approximations. In Handbooks on OR \& MS, Vol. 2, Stochastic Models. North-Holland, Amsterdam, 145--198.

\bibitem[Harrison (1988)]{Harrison_1988} Harrison, J. M. (1988). Brownian Models of Queueing Networks with Heterogeneous Customer Populations. In Stochastic Differential Systems, Stochastic Control Theory and Applications. Springer, 147--186.

\bibitem[Harrison (2003)]{Harrison_2003} Harrison, J. M. (2003). A Broader View of {Brownian} Networks. \textit{The Annals of Applied Probability}, \textbf{13}(3):1119--1150.

\bibitem[Iglehart (1965)]{Iglehart_1965} Iglehart, D. L. (1965). Limiting Diffusion Approximations for the Many Server Queue and the Repairman Problem. \textit{Journal of Applied Probability}, \textbf{2}(2):429--441.

\bibitem[Kogan (1992)]{Kogan_1992} Kogan, Y. (1992). Another Approach to Asymptotic Expansions for Large Closed Queueing Models. \textit{Operations Research Letters}, \textbf{11}(5):317--321.

\bibitem[Kogan and Lipster (1993)]{Kogan_Lipster_1993} Kogan, Y., Lipster, R. (1993). Limit Non-Stationary Behavior of Large Closed Queueing Networks with Bottlenecks. \textit{Queueing Systems}, \textbf{14}(1--2):33--55.

\bibitem[Kogan et~al. (1986)]{Kogan_Lipster_Smorodinskii_1986} Kogan, Y., Liptser, R., Smorodinskii, A. V. (1986). Gaussian Diffusion Approximation of Closed {Markov} Models of Computer Networks. \textit{Problems of Information Transmission}, \textbf{22}(1):38--51.

\bibitem[Krichagina (1992)]{Krichagina_1992} Krichagina, E. V. (1992). Asymptotic Analysis of Queueing Networks. \textit{Stochastics and Stochastics Reports}, \textbf{40}(1--2):43--76.

\bibitem[\"{O}zkan and Ward (2020)]{Ozkan_Ward_2020} \"{O}zkan, E., Ward, A. R. (2020). Dynamic Matching for Real-Time Ride Sharing. \textit{Stochastic Systems}, \textbf{10}(1):29--70.

\bibitem[Pang et~al. (2007)]{Pang_Talreja_Whitt_2007} Pang, G., Talreja, R., Whitt, W. (2007). Martingale Proofs of Many-Server Heavy-Traffic Limits for {Markovian} Queues. \textit{Probability Surveys}, \textbf{4}:193--267.

\bibitem[Reed and Ward (2004)]{Reed_Ward_2004} Reed, J., Ward, A. R. (2004). A Diffusion Approximation for a Generalized Jackson Network with Reneging. \textit{Proceedings of the 42nd Annual Conference on Communication, Control, and Computing}, \textbf{2}:983--994.


\bibitem[Whitt (2002)]{Whitt_2002} Whitt, W. (2002). Stochastic-Process Limits. Springer-Verlag, New York.

\bibitem[Whitt and Glynn (1991)]{Whitt_Glynn_1991} Whitt, W., Glynn, P. W. (1991). A New View on the Heavy-Traffic Limit Theorem for Infinite-Server Queues. \textit{Advances in Applied Probability}, \textbf{23}(1):188--209.



\end{thebibliography}

\appendix

\section*{\textsc{Appendix}}
\addcontentsline{toc}{section}{Appendices} 

\section{Proof of Lemma \ref{Lemma:A.1}}\label{appendix:C}
We prove that for each $T>0$, there exists a unique $y\in D^K_T$ satisfying (\ref{eq:A.1}) for all $t\in [0,T]$, then extend this solution to $D^K$ in the obvious way. To improve the readability of the argument, we break the proof into a few separate steps, organized as subsections. 

\subsection{Existence of an element in $D_K^T$ satisfying (\ref{eq:A.1}) for all $t\in [0,T]$} We prove existence via the method of successive approximations; see, e.g., \citet{Reed_Ward_2004} for a similar proof approach. In particular, we construct a sequence that is Cauchy in $D_K^T$ under the sup norm, and then argue that the limit of the sequence (which exists by completeness of $D_K^T$) satisfies (\ref{eq:A.1}).

Let $y_k^0\equiv 0$, and define $y_k^n\in D$ for $n\in \mathbb{N}$ iteratively as follows for each $k\in [K]$:
\begin{align}
\!\!\!\!\!\!\!y_k^n \coloneqq\,  \xi_k \,&-\, \eta_k\!\int_{0}^{\cdot}\!y_k^{n-1}(s)\,ds\notag\\\,&-\sum_{j=1}^{J}p_{jk}\psi\Big(\xi_j + \sum_{l=1}^{K}q_{lj}\eta_l\!\int_{0}^{\cdot}\!y_l^{n-1}\,ds\Big).\label{eq:B.1}
\end{align}
Then, the sequence $\{(y_1^n\vert_{[0,T]},\dots, y_K^n\vert_{[0,T]}):n=0,1,\dots \}$ of elements in $D_T^K$ defined by (\ref{eq:B.1}) is a Cauchy sequence with respect to the sup norm; see Lemma~\ref{Claim:B.1} at the end of this subsection for a proof of this claim. Therefore, by completeness of $(D_T^K, \|\cdot\|_{T,K})$, it follows that as $n\rightarrow\infty$, 
\begin{align}
\!\!\!\!\!\!\!\!\!\!\!\big(y_1^n\vert_{[0,T]},\dots, y_K^n\vert_{[0,T]}\big)\,\rightarrow\, \big(y^{\infty}_{1,T},\dots, y^{\infty}_{K,T}\big)\,\in\, D^K_T.\label{eq:B.2}
\end{align}
We claim that $(y^{\infty}_{1,T},\dots, y^{\infty}_{K,T})$ satisfies (\ref{eq:A.1}) for all $t\in [0,T]$. To show this, consider the mapping $L:D^K_T\rightarrow D^K_T$ defined as follows:\newline\vspace{-1.25em}
\begin{align*}
&\!\!\!\!\!\!\!\!(y_1,\dots, y_K)\notag\\&\!\!\!\!\!\!\!\!\qquad\mapsto\,\Big(\zeta_k\,-\,\eta_k\!\int_{0}^{\cdot}\!y_k(s)\,ds\notag\\&\!\!\!\!\!\!\!\!\qquad\qquad-\sum_{j=1}^{J}p_{jk}\psi\Big(\xi_j +\sum_{l=1}^{K}q_{lj}\eta_l\!\int_{0}^{\cdot}\!y_l(s)\,ds\Big)\Big)_{k\in [K]}.
\end{align*}
Then, for $y, \tilde{y}\in D_T^K$, it follows by the definition of $L$, the triangle inequality, and \citet[Lemma 13.5.1]{Whitt_2002} that
\begin{align*}
&\!\!\!\!\!\!\!\!\!\|L(y)- L(\tilde{y})\|_{T,K}\\&\!\!\!\!\!\!\!\!\!\quad\le\, \max_{k\in [K]}\Big\{\eta T\|y_k - \tilde{y}_k\|_T +\sum_{j=1}^{J}\sum_{l=1}^{K}\eta T\|y_l-\tilde{y}_l\|_T\Big\}\\
&\!\!\!\!\!\!\!\!\!\quad\le\, \eta T (JK+1) \|y-\tilde{y}\|_{T,K},
\end{align*}
implying that $L$ is Lipschitz continuous. It then follows from (\ref{eq:B.1})--(\ref{eq:B.2}) that
\begin{align*}
\big(y^{\infty}_{1,T},\dots, y^{\infty}_{K,T}\big)\,&\leftarrow\, \big(y_1^{n+1}\vert_{[0,T]},\dots, y_K^{n+1}\vert_{[0,T]}\big)\\\,&=\,L\big(y_1^n\vert_{[0,T]},\dots, y_K^n\vert_{[0,T]}\big)\\\,&\rightarrow\, L\big(y^{\infty}_{1,T},\dots, y^{\infty}_{K,T}\big),
\end{align*}
as $n\rightarrow\infty$. By uniqueness of limits in metric spaces, it follows that $L(y^{\infty}_{1,T},\dots, y^{\infty}_{K,T})= (y^{\infty}_{1,T},\dots, y^{\infty}_{K,T})$, implying that $(y^{\infty}_{1,T},\dots, y^{\infty}_{K,T})$ satisfies (\ref{eq:A.1}) for all $t\in [0,T]$.

We conclude this subsection with a proof showing that the sequence defined by (\ref{eq:B.1}) is Cauchy:
\begin{lemma}\label{Claim:B.1}
	For each $T>0$, the sequence $$\big\{\big(y_1^n\vert_{[0,T]},\dots, y_K^n\vert_{[0,T]}\big):n=0,1,\dots\! \big\}$$ defined by (\ref{eq:B.1}) is Cauchy in $D_K^T$ with respect to $\|\cdot\|_{T,K}$.
\end{lemma}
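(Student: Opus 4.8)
The plan is to establish the Cauchy property through the classical Picard-iteration estimate, tracking the successive differences pointwise in time rather than through a single sup-norm contraction bound. The reason a direct contraction argument is inadequate is that the natural Lipschitz constant of the iteration map, $\eta T(JK+1)$, exceeds one once $T$ is large; the remedy is to iterate a pointwise integral inequality and exploit the factorial decay it generates.

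To set up, I would write $\Delta_k^n \coloneqq y_k^{n+1} - y_k^n$ and $g^n(t) \coloneqq \max_{k\in[K]}\|\Delta_k^n\|_t$. Subtracting two consecutive instances of (\ref{eq:B.1}), the iteration-independent leading term cancels, leaving a linear integral term and a sum of reflection terms. For the linear term I would use $\eta_k \le \eta$ together with the elementary bound $\|\int_0^\cdot f(s)\,ds\|_t \le \int_0^t \|f\|_s\,ds$ to obtain $\eta\int_0^t g^{n-1}(s)\,ds$. For each reflection term I would apply the nonexpansivity of $\psi$ from (\ref{eq:D.4}) to the difference of its arguments, namely $\sum_l q_{lj}\eta_l\int_0^\cdot \Delta_l^{n-1}\,ds$, and then bound using $p_{jk},q_{lj}\le 1$ and $\eta_l \le \eta$. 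Summing the contributions over $j$ and $l$ and maximizing over $k$ yields the key recursion $g^n(t) \le C\int_0^t g^{n-1}(s)\,ds$ with $C \coloneqq (JK+1)\eta$.

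From here the argument is mechanical: iterating the recursion $n$ times gives $g^n(t) \le C^n\frac{t^n}{n!}g^0(T)$, where $g^0(T) = \max_k\|y_k^1\|_T$ is finite because $\xi$ and $\zeta$, being elements of $D_T$, are bounded on $[0,T]$ and $\psi$ preserves boundedness. In particular $g^n(T) \le \frac{(CT)^n}{n!}g^0(T)$. For $m>n$ the telescoping bound $\|y^m - y^n\|_{T,K} \le \sum_{i=n}^{m-1} g^i(T) \le g^0(T)\sum_{i\ge n}\frac{(CT)^i}{i!}$ then tends to zero as $n\to\infty$, since $\sum_{i\ge 0}(CT)^i/i! = e^{CT} < \infty$. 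This is precisely the Cauchy property claimed.

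The main obstacle is exactly the failure of the map to be a contraction on $[0,T]$ for large $T$: the crux of the proof is to replace the crude sup-norm Lipschitz estimate by the pointwise-in-time recursion $g^n(t)\le C\int_0^t g^{n-1}(s)\,ds$ and to recognize that iterating it produces the summable factorial bound $(CT)^n/n!$. The remaining verifications—the integral interchange inequality, the boundedness bookkeeping that keeps each $y_k^n$ in $D$, and the finiteness of $g^0(T)$—are routine.
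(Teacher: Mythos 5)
Your proof is correct, and it takes a genuinely different route from the paper's. You derive the pointwise recursion $g^n(t)\le (JK+1)\eta\int_0^t g^{n-1}(s)\,ds$---using the nonexpansivity bound (\ref{eq:D.4}) for $\psi$ and the estimate $\|\int_0^{\cdot}f(s)\,ds\|_t\le\int_0^t\|f\|_s\,ds$, both of which are sound---and iterate it to obtain the factorial bound $g^n(T)\le \frac{((JK+1)\eta T)^n}{n!}\,g^0(T)$, after which the telescoping series is dominated by the tail of $e^{(JK+1)\eta T}$ and the Cauchy property follows. The paper instead fixes a small $\delta$ with $2\delta\eta JK<1$ so that the iteration contracts geometrically on $[0,\delta]$, then propagates the estimate across blocks $[0,m\delta]$ by a strong induction that accumulates polynomial correction factors of the form $m^2 n^m$, and finally invokes the ratio test to show that the resulting bound $\lceil\delta^{-1}T\rceil^2\, n^{\lceil\delta^{-1}T\rceil}(2\delta\eta JK)^{n-1}C_{\lceil\delta^{-1}T\rceil\delta}$ is summable. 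Both arguments rest on exactly the same two elementary estimates and both conclude via summability of the successive differences; your Picard--Gronwall iteration avoids the interval-splitting bookkeeping entirely and yields a cleaner explicit rate valid on all of $[0,T]$ at once, whereas the paper's block-extension argument mirrors the interval-splitting technique it reuses for the uniqueness part of Lemma \ref{Lemma:A.1}, at the cost of a considerably more involved induction. Your side remarks are also accurate: the leading term cancels in the difference of consecutive iterates, $\xi_j$ appears in both arguments of $\psi$ so only the integral difference $\sum_{l}q_{lj}\eta_l\int_0^{\cdot}\Delta_l^{n-1}(s)\,ds$ survives the nonexpansivity bound, each iterate remains in $D_T^K$ because integration and $\psi$ preserve right continuity with left limits, and $g^0(T)\le\max_{k\in[K]}\|\zeta_k\|_T+\sum_{j=1}^{J}\|\psi(\xi_j)\|_T<\infty$, which is precisely the paper's constant $C_T$.
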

\begin{proof}Fix $\delta\in (0, T)$ such that $2\delta\eta JK<1$. (This choice of $\delta$ will be used crucially later.) First, we claim that 
\begin{align}
	\|y_k^n - y_k^{n-1}\|_{\delta} \,\le\,(2\delta \eta JK)^{n-1}C_\delta,\label{eq:B.21}
\end{align}
for all $n\in \mathbb{N}$ and $k\in [K]$, where $C_t:=\max_{k\in [K]}\|\zeta_k\|_t + \sum_{j=1}^{J}\|\psi(\xi_j)\|_t$ for $t>0$. (Note that $C_{t_1}\le C_{t_2}$ for all $0\le t_1\le t_2<\infty$.) To prove (\ref{eq:B.21}), note that for $n=1$,
	\begin{align*}
	\!\!\!\|y_k^1 - y_k^0\|_\delta \,&=\, \big\|\zeta_k - \sum_{j=1}^{J}p_{jk}\psi(\xi_j)\big\|_\delta \notag\\[-0.5em]&\le\, \|\zeta_k\|_\delta  + \sum_{j=1}^{J}\|\psi(\xi_j)\|_\delta\,\le\, C_\delta,
	\end{align*}
	for all $k\in [K]$. Moreover, note that for $n\ge 2$,
	\begin{align}
	&\!\!\!\!\!\!\!\!\|y_k^n - y_k^{n-1}\|_{\delta}\notag\\[0.75em]
	&\!\le\, \eta_k \delta\|y_k^{n-1}-y_{k}^{n-2}\|_{\delta} \notag\\[0.25em]&\,\,\,\,\,\,\,\,\,\,\,+ \sum_{j=1}^{J}p_{jk}\Big\|\sum_{l=1}^{K}q_{lj}\eta_l\!\int_{0}^{\cdot}\!\big(y_l^{n-1}(s)- y_l^{n-2}(s)\big)\,ds\Big\|_{\delta}\notag\\
	&\!\le\, 2\delta \eta J\sum_{l=1}^{K}\|y_l^{n-1}-y_l^{n-2}\|_{\delta}\notag,
	\end{align}
	for all $k\in [K]$, where the first inequality follows from \citet[Lemma 13.5.1]{Whitt_2002} and (\ref{eq:B.1}). By performing a similar estimate for $\|y_l^{n-1}-y_l^{n-2}\|_{\delta}$ in the above display, it follows that
	\begin{align*}
	\!\!\!\!\|y_k^n - y_k^{n-1}\|_{\delta} 
    \,\le\, (2\delta \eta J)^2K\sum_{l=1}^{K}\|y_l^{n-2}-y_l^{n-3}\|_{\delta},
	\end{align*}
	for all $k\in [K]$. Continuing in this way, we obtain (\ref{eq:B.21}). Next, we claim that for each $k\in [K]$, 
	\begin{align}
	\|y_k^n- y_k^{n-1}\|_{m\delta}\,\le\, m^2n^m (2\delta \eta J K)^{n-1}C_{m\delta},\label{eq:B.22}
	\end{align}
	for all $m,n\in\mathbb{N}$. Fixing $k\in [K]$, we consider the $n=1$ and $n\ge 2$ cases separately. When $n=1$, we have that $\|y_k^1- y_k^{0}\|_{m\delta}\le C_{m\delta}\le m^2 C_{m\delta}$ for all $m\in\mathbb{N}$ by (\ref{eq:B.21}), so that (\ref{eq:B.22}) holds. When $n\ge 2$, we proceed by (strong) induction on $m$. The base case of $m=1$ holds immediately by (\ref{eq:B.21}). 
    For the inductive step, we assume that for all $n\ge 2$,
	\begin{align}
	\|y_k^n- y_k^{n-1}\|_{r\delta}\,\le\, r^2n^r (2\delta \eta J K)^{n-1}C_{r\delta},
    \label{eq:B.24}
	\end{align}
	for all $r\in [m]$. Then, by (\ref{eq:B.24}), it follows that for $n\ge 2$,
	\begin{align}
	&\!\!\!\!\!\!\!\!\!\!\!\!\|y_k^n - y_k^{n-1}\|_{(m+1)\delta}\notag\\
	&\!\!\!\!\!\!\!\!\le\, \eta_k \sum_{r=1}^{m+1}\int_{(r-1)\delta}^{r\delta}\|y_k^{n-1}-y_k^{n-2}\|_{r\delta}\,ds\notag\\
	&+ \sum_{j=1}^{J}p_{jk}\Big\|\sum_{l=1}^{K}q_{lj}\eta_l\!\int_{0}^{\cdot}\!\vert y_l^{n-1}(s) - y_l^{n-2}(s)\vert\,ds\Big\|_{(m+1)\delta}\notag\\
	&\!\!\!\!\!\!\!\!=\,\delta \eta\Big[\sum_{r=1}^{m+1}\|y_k^{n-1}- y_k^{n-2}\|_{r\delta} \notag\\&\qquad+\, J\sum_{l=1}^{K}\sum_{r=1}^{m+1}\|y_l^{n-1}-y_l^{n-2}\|_{r\delta}\Big]\notag\\
	&\!\!\!\!\!\!\!\!\le\, 2\delta \eta J\sum_{l=1}^{K}\sum_{r=1}^{m}r(n-1)^r(2\delta\eta JK)^{n-2}C_{m\delta} \notag\\&\qquad +\, 2\delta\eta J\sum_{l=1}^{K}\|y_l^{n-1}-y_l^{n-2}\|_{(m+1)\delta}\notag\\
	&\!\!\!\!\!\!\!\!=\,(2\delta \eta JK)^{n-1}C_{r\delta}\sum_{r=1}^{m}r(n-1)^r\notag\\&\qquad+\, 2\delta\eta J\sum_{l=1}^{K}\|y_l^{n-1}-y_l^{n-2}\|_{(m+1)\delta}\notag\\
	&\!\!\!\!\!\!\!\!\le\, (2\delta \eta JK)^{n-1}C_{m\delta} m^2n^m \notag\\&\qquad +\, 2\delta\eta J\sum_{l=1}^{K}\|y_l^{n-1}-y_l^{n-2}\|_{(m+1)\delta}.\notag
	\end{align}
	Continuing from the above display, it follows from (\ref{eq:B.21}) that for $n=2$,
	\begin{align*}
	&\!\!\!\!\!\!\!\|y_k^2 - y_k^1\|_{(m+1)\delta}\notag\\&\le\, (2\delta\eta JK)C_{m\delta} m^2 2^m + 2\delta\eta J\sum_{l=1}^{K}\|y_l^1-y_l^0\|_{(m+1)\delta}\notag\\
    &\le\, (2\delta\eta JK)C_{(m+1)\delta}(m^2 2^m + 1).
	\end{align*}
	Continuing in this way, it follows that for $n\ge 2$,
	\begin{align}
	&\!\!\!\!\!\!\!\|y_k^n - y_k^{n-1}\|_{(m+1)\delta}\notag\\&\le\, (2\delta \eta JK)^{n-1}C_{(m+1)\delta}\big(m^2\sum_{i=2}^{n}i^m +1\big)\notag\\
	&\le\, (m+1)n^{m+1}(2\delta \eta JK)^{n-1}C_{(m+1)\delta}.\notag
	\end{align}
	This completes the inductive step. This proves that (\ref{eq:B.22}) holds for all $m, n\in\mathbb{N}$.
	
	Equipped with (\ref{eq:B.22}), it now follows that
	\begin{align}
	&\!\!\!\!\!\!\!\|y_k^n - y_k^{n-1}\|_T\notag\\&\le\, \|y_k^n - y_k^{n-1}\|_{\lceil \delta^{-1}T\rceil \delta}\notag\\&\le\, \lceil \delta^{-1}T\rceil^2 n^{\lceil \delta^{-1}T\rceil} (2\delta JK)^{n-1}C_{\lceil \delta^{-1}T\rceil \delta},\notag
	\end{align}
	for all $n\in\mathbb{N}$ and $k\in [K]$, implying that
	\begin{align}
	&\!\!\!\!\!\!\!\|y_k^n - y_k^{n-1}\|_{T,K}\notag\\&\le\, \lceil \delta^{-1}T\rceil^2 n^{\lceil \delta^{-1}T\rceil} (2\delta JK)^{n-1}C_{\lceil \delta^{-1}T\rceil \delta}.\label{eq:B.30}
	\end{align}
	Thus, to prove that the sequence $\{(y_1^n\vert_{[0,T]},\dots, y_K^n\vert_{[0,T]}):n=0,1,\dots \}$ defined by (\ref{eq:B.1}) is Cauchy in $D_T^K$, it suffices to show that the right-hand side of (\ref{eq:B.30}) converges to zero as $n\rightarrow\infty$. But, by our choice of $\delta$, note that
	\begin{align*}
	&\!\!\!\!\!\!\!\limsup_{n\rightarrow\infty} \Bigg\vert\frac{\lceil \delta^{-1}T\rceil^2 (n+1)^{\lceil \delta^{-1}T\rceil} (2\delta JK)^{n}C_{\lceil \delta^{-1}T\rceil \delta}}{\lceil \delta^{-1}T\rceil^2 n^{\lceil \delta^{-1}T\rceil} (2\delta JK)^{n-1}C_{\lceil \delta^{-1}T\rceil \delta}}\Bigg\vert \\&=\, \limsup_{n\rightarrow\infty}\Big[\frac{(n+1)^{\lceil \delta^{-1}T\rceil}\,2\delta JK}{n^{\lceil \delta^{-1}T\rceil}} \Big]\,=\,  2\delta JK\,<\,1,
	\end{align*}
	implying that $\sum_{n=1}^{\infty}\lceil \delta^{-1}T\rceil^2 n^{\lceil \delta^{-1}T\rceil} (2\delta JK)^{n-1}C_T<\infty$ by the ratio test. Hence, the right-hand side of (\ref{eq:B.30}) converges to zero as $n\rightarrow\infty$, completing the proof.\end{proof}

\subsection{Uniqueness of the element in $D_K^T$ satisfying (\ref{eq:A.1}) for all $t\in [0,T]$} We show that $(y^{\infty}_{1,T},\dots, y^{\infty}_{K,T})\in D_T^K$ given by (\ref{eq:B.2}) is the unique element in $D_T^K$ satisfying (\ref{eq:A.1}) for all $t\in [0,T]$. Suppose that $(y_1,\dots, y_K), (\tilde{y}_1,\dots, \tilde{y}_K)\in D^K_T$ both satisfy (\ref{eq:A.1}) for all $t\in [0,T]$. We partition the interval $[0,T]$ into a finite number of subintervals and show that both solutions agree on each of these subintervals. To that end, define
\begin{align*}
m \coloneqq  \inf\big\{n\ge 1: \frac{n}{2}\left(2JK\eta\right)^{-1}>T\big\}.
\end{align*}
First, letting $t_1\coloneqq \tfrac{1}{2}\left(2JK\eta\right)^{-1}$, we have that
\begin{align}
&\!\!\!\!\!\!\!\max_{k\in [K]}\|y_k-\tilde{y}_k\|_{t_1}\notag\\ 
&\le\, \max_{k\in [K]}\eta\!\int_{0}^{t_1}\!\|y_k -\tilde{y}_k\|_{t_1}\,ds \notag\\&\qquad\qquad+ \sum_{j=1}^{J}\sum_{l=1}^{K}\eta\!\int_{0}^{t_1}\!\|y_l - \tilde{y}_l\|_{t_1}\,ds\notag\\
&\le\, 2JK\eta \, t_1\max_{k\in [K]}\|y_k -\tilde{y}_k\|_{t_1}\notag\\&=\,\frac{1}{2}\max_{k\in [K]}\|y_k -\tilde{y}_k\|_{t_1}.\notag
\end{align}
Hence, it follows that $\max_{k\in [K]}\|y_k -\tilde{y}_k\|_{t_1}=0$, implying that $y \equiv \tilde{y}$ on $[0, t_1]$. Second, letting $t_2\coloneqq (2JK\eta)^{-1}$, we have that
\begin{align}
&\!\!\!\!\!\!\!\max_{k\in [K]}\|y_k - \tilde{y}_k\|_{t_2}\notag\\
&=\, \max_{k\in [K]}\eta\, \Big\|\int_{0}^{t_1}\vert y_k(s)-\tilde{y}_k(s)\vert\,ds \notag\\&\qquad\qquad\qquad + \int_{t_1}^{\cdot}\vert y_k(s)- \tilde{y}_k(s)\vert\,ds\Big\|_{t_2}\notag\\&\qquad\qquad +\,\eta J\sum_{l=1}^{K}\Big\|\int_{0}^{t_1}\vert y_l(s)-\tilde{y}_l(s)\vert\,ds \notag\\&\qquad\qquad\qquad + \int_{t_1}^{\cdot}\vert y_l(s)- \tilde{y}_l(s)\vert\,ds\Big\|_{t_2}\notag\\
&\le\, \max_{k\in [K]}\big\{\eta\, t_1\|y_k-\tilde{y}_k\|_{t_1} \,+\, (t_2-t_1)\|y_k-\tilde{y}_k\|_{t_2}\big\}\notag\\&\qquad +\,\eta J\sum_{l=1}^{K}\big[\,t_1\|y_l-\tilde{y}_l\|_{t_1} + (t_2-t_1)\|y_l-\tilde{y}_l\|_{t_2}\,\big]\notag\\
&\le\, 2\eta JK(t_2-t_1)\max_{k\in [K]}\|y_k-\tilde{y}_k\|_{t_2}\notag\\
&=\,\frac{1}{2}\max_{k\in [K]}\|y_k -\tilde{y}_k\|_{t_2},\notag
\end{align}
where the third inequality follows from the fact that $y\equiv \tilde{y}$ on $[0,t_1]$. Hence, it follows that $\|y_k-\tilde{y}_k\|_{t_2}=0$, implying that $y \equiv \tilde{y}$ on $[0, t_2]$. 

Continuing in an iterative fashion, the same argument shows that $y\equiv \tilde{y}$ on $[0,t_n]$ for all $n\in [m-1]$, where $t_n \coloneqq \tfrac{n}{2}(2JK\eta)^{-1}$ for $n\in [m-1]$. 
If $t_{m-1}=T$, then we are done. However, if $t_{m-1}<T$, then we can let $t_m \coloneqq T$ and use the same argument to show that $y\equiv\tilde{y}$ on $[0,t_m]$. 

\subsection{Extension to a unique element in $D^K$ satisfying (\ref{eq:A.1}) for all $t\in [0,\infty)$} 
The previous two subsections have shown that, for each $T>0$, there exists a unique $(y^{\infty}_{1,T},\dots, y^{\infty}_{K,T})\in D_T^K$ satisfying (\ref{eq:A.1}) for all $t\in [0,T]$. Using these solutions, we construct an element in $D^K$ that uniquely satisfies (\ref{eq:A.1}) for all $t\in [0,\infty)$. To that end, define $(y^{\infty}_{1},\dots, y^{\infty}_{K})\in D^K$ by 
\begin{align*}
\!\!\!\!\!\!\!\!\!\!\big(y^{\infty}_{1},\dots, y^{\infty}_{K}\big)(t)\coloneqq\,  \big(y^{\infty}_{1, T},\dots, y^{\infty}_{K, T}\big)(t),\quad t\in [0,T],
\end{align*}
for each $T>0$. We claim that $(y^{\infty}_{1},\dots, y^{\infty}_{K})$ is well-defined and is the unique element in $D^K$ satisfying (\ref{eq:A.1}) for all $t\in [0,\infty)$. To prove that it is well-defined, we must show that whenever $t\in [0,T_1]\cap [0,T_2]$,
\begin{align}
\big(y_{1, T_1}^{\infty},\dots, y_{K, T_1}^{\infty}\big)(t) \,=\, \big(y_{1, T_2}^{\infty},\dots, y_{K, T_2}^{\infty}\big)(t).\label{eq:B.16}
\end{align}
Without loss of generality, suppose that $T_1\le T_2$. Then $(y_{1, T_2}^{\infty},\dots, y_{K, T_2}^{\infty})\vert_{[0,T_1]}\in D^K_{T_1}$ satisfies (\ref{eq:A.1}) for all $t\le T_1$. By uniqueness, 
\begin{align*}
\big(y_{1, T_1}^{\infty},\dots, y_{K, T_1}^{\infty}\big)\,=\,\big(y_{1, T_2}^{\infty},\dots, y_{K, T_2}^{\infty}\big)\big\vert_{[0,T_1]},
\end{align*}
which implies that
\begin{align*}
\big(y_{1, T_1}^{\infty},\dots, y_{K, T_1}^{\infty}\big)(t) \,&=\, \big(y_{1, T_2}^{\infty},\dots, y_{K, T_2}^{\infty}\big)\big\vert_{[0,T_1]}(t) \\\,&=\, \big(y_{1, T_2}^{\infty},\dots, y_{K, T_2}^{\infty}\big)(t),
\end{align*}
for all $t\in [0,T_1]$, proving (\ref{eq:B.16}) holds. Finally, by the construction, it is obvious that $(y^{\infty}_{1},\dots, y^{\infty}_{K})$ uniquely satisfies (\ref{eq:A.1}) for all $t\in [0,\infty)$.\hfill\qedsymbol


\section{Proof of Proposition \ref{Prop:4.1}}\label{appendix:A}
Fix $(\xi, \zeta)\in D^{J+K}$ satisfying (\ref{eq:4.1})--(\ref{eq:4.2}). We first prove existence. By Lemma \ref{Lemma:A.1}, there exists a $y\in D^K$ satisfying (\ref{eq:A.1}). Then, for $j\in [J]$, define
	\begin{alignat}{2}
	\!\!\!\!\!\!\!\!\!\!\!u_j(t)&\coloneqq\,  \mu_j^{-1}\psi\Big(\xi_j \,+ \sum_{l=1}^{K}q_{lj}\eta_l\!\int_{0}^{t}\!y_l(s)\,ds\Big),&&\,\,\,\, t\ge 0,\!\!\!\!\!\label{eq:A.2}\\
	\!\!\!\!\!\!\!\!\!\!\!x_j(t)&\coloneqq\,  \phi\Big(\xi_j \,+ \sum_{l=1}^{K}q_{lj}\eta_l\!\int_{0}^{t}\!y_l(s)\,ds\Big),&&\,\,\,\, t\ge 0.\!\!\!\!\!\label{eq:A.3}
	\end{alignat}
	Since $y\in D^K$, it follows that $u\in D^J$ and $x\in D^J$, so that $(x,u,y)\in D^{2J + K}$. To prove existence, it suffices to show that $(x,u,y)$ satisfies (\ref{eq:4.3})--(\ref{eq:4.7}). Equation (\ref{eq:4.3}) follows from (\ref{eq:D.2})--(\ref{eq:D.3}) and (\ref{eq:A.2})--(\ref{eq:A.3}). Equation (\ref{eq:4.4}) follows from (\ref{eq:A.1})--(\ref{eq:A.2}). Equation (\ref{eq:4.5}) follows from (\ref{eq:4.1}), (\ref{eq:4.3})--(\ref{eq:4.4}), and the fact that the matrices in (\ref{Equation:Stochastic_Matrix}) are stochastic. Equation (\ref{eq:4.6}) follows from (\ref{eq:D.2}), (\ref{eq:4.1}), and (\ref{eq:A.2}). Finally, for $j\in [J]$, define 
	\begin{align}
	z_j(t)\coloneqq\,  \xi_j \,+ \sum_{l=1}^{K}q_{lj}\eta_l\!\int_{0}^{t}\!\!y_l(s)\,ds,\quad t\ge 0.\label{eq:A.4}
	\end{align} 
	Thus, $u_j=\mu_j^{-1}\psi(z_j)$ and $x_j = \phi(z_j)$. But, by (\ref{eq:D.2})--(\ref{eq:D.3}), it follows that $\int_{0}^{\infty}\mathds{1}_{\{ x_j>0\}}\,d(u_j)(t)=0.$ Therefore, (\ref{eq:4.7}) holds.
	
	We now prove uniqueness. Suppose $(x, u, y), (\tilde{x}, \tilde{u}, \tilde{y})\in D^{2J+K}$ both satisfy (\ref{eq:4.3})--(\ref{eq:4.7}). Then, by (\ref{eq:4.3}), we have that
	\begin{align}
	\!x_j \,=\, z_j \,+\, \mu_j u_j\,\ge\, 0,\quad \tilde{x}_j \,=\, \tilde{z}_j \,+\,\mu_j \tilde{u}_j\,\ge\, 0,\label{eq:A.7}
	\end{align}
	for $j\in [J]$, where $z_j$ and $\tilde{z}_j$ are given by (\ref{eq:A.4}), with $y_j$ and $\tilde{y}_j$, respectively. Since $(x,u)$ and $(\tilde{x},\tilde{u})$ both satisfy (\ref{eq:4.6})--(\ref{eq:4.7}), it follows that $(x,\mu_j u_j)$ and $(\tilde{x},\mu_j \tilde{u}_j)$ also both satisfy (\ref{eq:4.6})--(\ref{eq:4.7}). By this and (\ref{eq:A.7}), for $j\in [J]$ we have that
	\begin{align}
	\,\,\,\,\,\,\,\,\,\mu_j u_j \,=\, \psi(z_j)\quad\text{and}\quad \mu_j \tilde{u}_j \,=\, \psi(\tilde{z}_j).\label{eq:A.9}
	\end{align}
	It then follows from (\ref{eq:4.4}), (\ref{eq:A.4}), (\ref{eq:A.9}), and Lemma \ref{Lemma:A.1} that $y_k = \tilde{y}_k$ for all $k\in [K]$. By uniqueness of $y$, it follows from (\ref{eq:A.4}) and (\ref{eq:A.9}) that $u_j = \tilde{u}_j$ for all $j\in [J]$. Finally, by uniqueness of $y$ and $u$, it follows from (\ref{eq:A.4})--(\ref{eq:A.7}) that $x_j = \tilde{x}_j$ for all $j\in [J]$. This completes the proof.

\section{Proof of Proposition \ref{Prop:4.3}}
\label{appendix:B}

It is easy to show that the function $f=(f_1,f_2,f_3)$ is continuous if the functions $f_1$, $f_2$, and $f_3$ are continuous. On the other hand, it is straightforward to prove continuity of $f_1$ and $f_2$ once the continuity of $f_3$ is established. Thus, we only prove continuity of $f_3$ and omit the continuity proofs of $f_1$, $f_2$, and $f$ for the purposes of brevity.

In the Skorokhod $J_1$ topology (see, e.g., \citet{Billingsley_1999} and \citet{Whitt_2002}), a sequence $\{x^n\}_{n=1}^{\infty}$ in $D^k$ converges to an element $x\in D^k$, written $x^n\rightarrow x$, as $n\rightarrow\infty$, if $d_{T}^k(x^n\vert_{[0,T]}, x\vert_{[0,T]})\rightarrow 0$ as $n\rightarrow\infty$ for all continuity points $T>0$ of $x$,
where $d_T^k:D_T^k\,\times\, D_T^k\rightarrow [0,\infty)$ is given by 
\begin{align*}
\!\!\!\!\!d_T^k(x,y)\coloneqq \inf_{\lambda\in \Lambda_T}\!\big\{\|x\,\circ\, \lambda\,-\,y\|_{T,k}\,\vee\, \|\lambda \,-\, e\|_T\big\},
\end{align*}
where $e:[0,T]\rightarrow [0,T]$ is the identity map
, and 
\begin{align}
\Lambda_T\coloneqq \big\{\lambda&:[0,T]\rightarrow[0,T]\nonumber\\&\vert\,\lambda\text{ is an increasing homeomorphism}\big\}.\notag
\end{align}
To that end, suppose that $(\xi^n,\zeta^n)\rightarrow (\xi,\zeta)$ in $D^{J+K}$ as $n\rightarrow\infty$ and let $\tilde{T}>0$ be a continuity point of $f_3(\xi, \zeta)$. To prove that $f_3$ is continuous with respect to the Skorokhod $J_1$ topology, we must show that 
\begin{align}
\lim_{n\rightarrow\infty} d_{\tilde{T}}^K\big(f_3(\xi^n,\zeta^n)\vert_{[0,\tilde{T}]}, f_3(\xi, \zeta)\vert_{[0,\tilde{T}]}\big)\,=\,0.\notag
\end{align}
Since $(\xi, \zeta)\in D^{J+K}$, it has at most countably many points of discontinuity; see, e.g., \citet[Lemma 5.1]{Ethier_Kurtz_2005}. It follows that there exists some $T>\tilde{T}$ that is a continuity point of $(\xi,\zeta)$. Therefore, by \citet[Lemma 1, page 167]{Billingsley_1999}, it suffices to show that
	\begin{align}
	\lim_{n\rightarrow\infty}d_{T}^K\big(f_3(\xi^n,\zeta^n)\vert_{[0,T]}, f_3(\xi, \zeta)\vert_{[0,T]}\big)\,=\,0.\label{eq:A.15}
	\end{align}
    However, we note that (\ref{eq:A.15}) can equivalently be written with $\tilde{d}_T^{\,\,k}$ in place of $d_T^k$ (see, e.g., page 226 of \citet{Pang_Talreja_Whitt_2007} and \citet{Billingsley_1999}), where 
	$\tilde{d}_T^{k}:D_T^k\times D_T^k\rightarrow [0,\infty)$ is given by 
	\begin{align*}
	\tilde{d}_T^{k}(x,y)\coloneqq \inf_{\lambda\in \tilde{\Lambda}_T}\!\big\{\|x\,\circ\, \lambda-y\|_{T,k}\vee\|\dot{\lambda}-1\|_T\big\},
	\end{align*}
	where $\dot{\lambda}$ is the derivative of $\lambda$, $1$ is the constant function taking the value one everywhere, and
	\begin{align*}
	\!\!\!\!\!\!\tilde{\Lambda}_T\coloneqq\big\{\lambda\in\Lambda_T:\lambda&\text{ is absolutely continuous}\notag\\&\qquad \text{w.r.t. Lebesgue measure}\big\}.
	\end{align*} 
	Therefore, the remainder of the proof aims at proving (\ref{eq:A.15}) with $\tilde{d}_T^{K}$ in place of $d_{T}^K$. To avoid overly cumbersome notation, we write $f_3(\xi^n,\zeta^n)$ and $f_3(\xi, \zeta)$ to mean $f_3(\xi^n,\zeta^n)\vert_{[0,T]}$ and $f_3(\xi, \zeta)\vert_{[0,T]}$, respectively.
    
    Since $T>0$ is a continuity point of $(\xi, \zeta)$ and $(\xi^n,\zeta^n)\rightarrow (\xi,\zeta)$ in $D^{J+K}$ as $n\rightarrow\infty$, there exists a sequence of homeomorphisms $\lambda^n\in\tilde{\Lambda}_T$ such that 
	\begin{align}
	\!\!\!\!\!\!\!\!\!\!\|(\xi,\zeta)\,\circ\, \lambda^n \,-\, (\xi^n,\zeta^n)\|_{T, J+K}\,\vee\, \|\dot{\lambda}^n\,-\,1\|_{T}\,\rightarrow\, 0,\label{eq:A.17}
	\end{align}
	as $n\rightarrow\infty$. Then, letting $y:= f_3(\xi,\zeta)$ and $y^n:= f_3(\xi^n, \zeta^n)$, it follows from the definition of $f_3$ and the triangle inequality that for any $0\le t\le T$,
	\begin{align}
	&\hspace{-2.5em}\max_{k\in [K]}\|y_k\,\circ\, \lambda^n \,-\, y^n_k\|_t\notag\\
	&\hspace{-2.5em}\,\,\,\,\,\,\le\,\max_{k\in [K]}\Big[\|\zeta_k\,\circ\,\lambda^n \,-\, \zeta_k^n\|_t\notag\\&\,\,\,+\,\eta\,\Big\|\int_{0}^{\lambda^n(\cdot)}\!\!\!y_k(s)\,ds\,-\int_{0}^{\cdot}\!y_k^n(s)\,ds\Big\|_t\notag\\
	&\,\,\,+\sum_{j=1}^{J}\Big\| \psi\Big(\xi_j \,+ \sum_{l=1}^{K}q_{lj}\eta_j\!\int_{0}^{\cdot}\!y_l(s)\,ds\Big)\circ \lambda^n\notag\\&\,\,\,\,\,\,\,\,\,\,\,\,\,\,\,\,\,\,\,\,\,-\,\psi\Big(\xi_j^n\,+\sum_{l=1}^{K}q_{lj}\eta_l\!\int_{0}^{\cdot}\!y_l^n(s)\,ds\Big)\Big\|_t\Big].\label{eq:A.18}
	\end{align}
	We next bound each term on the right-hand side of (\ref{eq:A.18}). First, let $M_T\coloneqq \max_{k\in [K]}\|y_k\|_T<\infty$. Then, by the chain rule, it follows that
	\begin{align}
	&\hspace{-1.5em}\Big\|\int_{0}^{\lambda^n(\cdot)}\!\!\!y_k(s)\,ds\,-\int_{0}^{\cdot}\!y_k^n(s)\,ds\Big\|_t\notag\\&\!=\,\Big\|\int_{0}^{\cdot}\!y_k(\lambda^n(s))\,\dot{\lambda}^n(s)\,ds\,-\int_{0}^{\cdot}\!y_k^n(s)\,ds\Big\|_t\notag\\
	&\!\le\,  \Big\| \int_{0}^{\cdot}\!y_k(\lambda^n(s))\big(\dot{\lambda}^n(s)\,-\,1\big)\,ds\Big\|_t\notag \\&\,\,\,\,\,\,\,\,\,\,\,+\, \Big\| \int_{0}^{\cdot}\!\big(y_k(\lambda^n(s))\,-\,y_k^n(s)\big)\,ds\Big\|_t\notag\\
	&\!\le\, TM_T\|\dot{\lambda}\,-\,1\|_T \,+ \int_{0}^{t}\|y_k\,\circ\, \lambda^n \,-\, y_k^n\|_s\,ds.\label{eq:A.20}
	\end{align}
	Similarly, by the chain rule, it follows that
	\begin{align}
	&\hspace{-1.25em}\Big\| \psi\Big(\xi_j \,+ \sum_{l=1}^{K}q_{lj}\eta_j\!\int_{0}^{\cdot}\!y_l(s)\,ds\Big)\,\circ\, \lambda^n\notag\\&\,\,\,\,\,\,\,\,\,\,\,\,-\,\psi\Big(\xi_j^n\,+\sum_{l=1}^{K}q_{lj}\eta_l\!\int_{0}^{\cdot}\!y_l^n(s)\,ds\Big)\Big\|_t\notag\\&\hspace{-0.5em}\le\, \|\xi_j\,\circ\,\lambda^n \,-\, \xi_j^n\|_T\notag\\&\,\,\,\,\,\,\,\,\,\,\,\,+\,\eta\sum_{l=1}^{K}\Big\|\int_{0}^{\lambda^n(\cdot)}\!\!\!y_l(s)\,ds \,- \int_{0}^{\cdot}\!y_l^n(s)\,ds\Big\|_t\notag\\
	&\hspace{-0.5em}\le\,\|\xi_j\,\circ\,\lambda^n \,-\, \xi_j^n\|_T\notag\\&\,\,\,\,\,\,\,\,\,\,\,\,+\,\eta\sum_{l=1}^{K}\Big\|\int_{0}^{\cdot}\!y_l(\lambda^n(s))\,\dot{\lambda}^n(s)\,ds \,- \int_{0}^{\cdot}\!y_l^n(s)\,ds\Big\|_t\notag\\&\hspace{-0.5em}\le\, \|\xi_j\,\circ\,\lambda^n \,-\, \xi_j^n\|_T\,+\,\eta KTM_T\|\dot{\lambda}^n\,-\,1\|_T\notag\\&\,\,\,\,\,\,\,\,\,\,\,\,+\, \eta\sum_{l=1}^{K}\int_{0}^{t}\|y_l\,\circ\,\lambda^n \,-\, y_l^n\|_s\,ds,\label{eq:A.22}
	\end{align}
	where the first inequality holds by \citet[Lemma 13.5.1]{Whitt_2002} and the fact that
	\begin{align*}
	&\psi\Big(\xi_j \,+ \sum_{l=1}^{K}q_{lj}\eta_j\!\int_{0}^{\cdot}\!y_l(s)\,ds\Big)\,\circ\, \lambda^n \\&\,\,\,\,\,\,=\, \psi\Big(\xi_j\,\circ\, \lambda^n \,+\sum_{l=1}^{K}q_{lj}\eta_j\!\int_{0}^{\lambda^n(\cdot)}\!\!\!y_l(s)\,ds\Big);
	\end{align*}
	see, e.g., \citet[Lemma 13.5.2]{Whitt_2002}. Combining these estimates, it then follows from (\ref{eq:A.18})--(\ref{eq:A.22}) that
	\begin{align}
	&\hspace{-1em}\max_{k\in [K]}\|y_k\,\circ\, \lambda^n \,-\, y^n_k\|_t\notag\\
	&\hspace{-1em}\,\,\,\,\,\,\le\, 2J\|(\xi,\zeta)\,\circ\, \lambda^n \,-\, (\xi^n,\zeta^n)\|_{T,J+K} \notag\\[2mm]&\,\,\,\,\,\,\,\,\,+\, (JK+1)\,\eta TM_T\|\dot{\lambda}^n\,-\,1\|_T\notag\\&\,\,\,\,\,\,\,\,\, +\,2\eta JK\!\int_{0}^{t}\!\!\max_{k\in[K]}\|y_k\,\circ\,\lambda^n \,-\, y_k^n\|_s\,ds,\label{eq:A.23}
	\end{align}
	for all $0\le t\le T$. Fixing $\epsilon>0$, it follows from (\ref{eq:A.17}) that there exists an $N\in\mathbb{N}$ such that for all $n\ge N$,
	\begin{align}
	2J\|(\xi,\zeta)\,\circ\, \lambda^n \,-\, (\xi^n,\zeta^n)\|_{T,J+K}&\,<\,\frac{\ep}{2e^{2\eta JKT}},\label{eq:A.24}\\
	(JK+1)\,\eta TM_T\|\dot{\lambda}^n\,-\,1\|_T&\,<\,\frac{\ep}{2e^{2\eta JKT}},\label{eq:A.25}\\
	\|\dot{\lambda}^n \,-\, 1\|_T&\,<\, \ep.\label{eq:A.29}
	\end{align}
	Then, by (\ref{eq:A.23})--(\ref{eq:A.25}), it follows that for all $n\ge N$,
	\begin{align}
	&\hspace{-1.5em}\max_{k\in [K]}\|y_k\,\circ\, \lambda^n \,-\, y^n_k\|_t\notag\\&<\, \frac{\ep}{e^{2\eta JKT}} \,+\, 2\eta JK\!\int_{0}^{t}\!\!\max_{k\in [K]}\|y_k\,\circ\,\lambda^n \,-\, y_k^n\|_s\,ds,\notag
	\end{align}
	for all $0\le t\le T$. By Gronwall's inequality (see, e.g., \citet[Lemma 4.1]{Pang_Talreja_Whitt_2007}) and the above displayed inequality, it follows that for all $n\ge N$,
	\begin{align}
	\max_{k\in [K]}\|y_k\,\circ\, \lambda^n \,-\, y^n_k\|_t\,<\, e^{2\eta JK(t-T)}\ep\,\le\,\ep,\label{eq:A.27}
	\end{align}
	for all $0\le t\le T$. Therefore, by (\ref{eq:A.29})--(\ref{eq:A.27}), we have that $\|y\,\circ\,\lambda^n -y^n\|_{T,K}\vee\|\dot{\lambda}^n - 1\|_T< \ep$ for all $n\ge N$. This completes the proof.

\section{Miscellaneous Proofs}\label{appendix:D}
Below is the proof of a result used in the proof of Lemma~\ref{Lemma:5.1} in Section~\ref{Section:5}.

\begin{lemma}\label{Lemma:C.1}
	If $\left\{X^n\right\}_{n=1}^{\infty}$ is a random sequence in $D$ such that $\|X^n\|_T\Rightarrow 0$ as $n\rightarrow\infty$ for all $T>0$, then $X^n\Rightarrow \mathbf{0}$ as $n\rightarrow\infty$.
\end{lemma}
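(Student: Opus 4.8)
The plan is to leverage the fact that the limit $\mathbf{0}$ is deterministic, so that the claimed weak convergence $X^n \Rightarrow \mathbf{0}$ is equivalent to convergence in probability with respect to any metric $\rho$ inducing the Skorokhod $J_1$ topology on $D$ (see, e.g., \citet{Billingsley_1999}). It therefore suffices to prove $\rho(X^n, \mathbf{0}) \to 0$ in probability. For concreteness I would take the exponentially weighted integral of the finite-horizon metrics, $\rho(x, y) := \int_0^\infty e^{-u}\big(d_u(x\vert_{[0,u]}, y\vert_{[0,u]}) \wedge 1\big)\, du$, which induces $J_1$ on $D$; see, e.g., \citet{Whitt_2002}.

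The step that does the real work is the observation that the $J_1$ distance to the zero function collapses to the uniform distance. Precisely, for every $u > 0$,
\begin{align*}
d_u(x\vert_{[0,u]}, \mathbf{0}) = \inf_{\lambda \in \Lambda_u}\big\{\|x \circ \lambda\|_u \vee \|\lambda - e\|_u\big\} = \|x\|_u,
\end{align*}
since each $\lambda \in \Lambda_u$ is an increasing homeomorphism of $[0,u]$ onto itself and hence $\|x \circ \lambda\|_u = \|x\|_u$ independently of $\lambda$, while $\|\lambda - e\|_u \ge 0$ vanishes precisely at $\lambda = e$. In particular $d_u(X^n\vert_{[0,u]}, \mathbf{0}) = \|X^n\|_u$ for all $u$.

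With this identity recorded, I would estimate $\rho(X^n, \mathbf{0})$ by truncating at a large horizon $T_0$. Because $u \mapsto \|X^n\|_u$ is nondecreasing, the contribution from $[0, T_0]$ is at most $\int_0^{T_0} e^{-u}(\|X^n\|_{T_0} \wedge 1)\,du \le \|X^n\|_{T_0} \wedge 1$, while the contribution from $(T_0, \infty)$ is bounded deterministically by $\int_{T_0}^\infty e^{-u}\,du = e^{-T_0}$. Hence $\rho(X^n, \mathbf{0}) \le (\|X^n\|_{T_0} \wedge 1) + e^{-T_0}$. Given $\epsilon > 0$, I would fix $T_0$ with $e^{-T_0} < \epsilon/2$, and then use the hypothesis $\|X^n\|_{T_0} \Rightarrow 0$---which, the limit being the constant $0$, amounts to $\|X^n\|_{T_0} \to 0$ in probability---to conclude that $P(\rho(X^n, \mathbf{0}) > \epsilon) \le P(\|X^n\|_{T_0} > \epsilon/2) \to 0$ as $n \to \infty$. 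This yields $\rho(X^n, \mathbf{0}) \to 0$ in probability and hence $X^n \Rightarrow \mathbf{0}$.

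I do not expect a genuine obstacle: once the collapse $d_u(\cdot, \mathbf{0}) = \|\cdot\|_u$ is noted, the remainder is a routine truncation. The only point meriting care is the passage from the finite-horizon hypotheses (one for each $T$) to convergence in the global space $D([0,\infty), \mathbb{R})$, which is handled cleanly by the exponentially weighted metric together with the monotonicity of the sup norm in the time horizon.
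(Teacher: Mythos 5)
Your proposal is correct and follows essentially the same route as the paper's proof: both reduce weak convergence to the deterministic limit to convergence in probability under the exponentially weighted metric $\int_0^\infty e^{-t}\left[d_t(\cdot,\mathbf{0})\wedge 1\right]dt$, bound the $J_1$ distance to zero by the sup norm, and truncate the integral at a horizon chosen so the tail is below $\epsilon/2$. The only cosmetic difference is that you prove the exact identity $d_u(x,\mathbf{0})=\|x\|_u$, whereas the paper only uses the one-sided bound obtained by taking $\lambda=e$; either suffices.
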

\begin{proof}Note that $\|X^n\|_T\Rightarrow 0$ as $n\rightarrow\infty$ for all $T>0$ is equivalent to $\|X^n\|_T\overset{p}{\rightarrow} 0$ as $n\rightarrow\infty$ for all $T>0$, where the notation $\overset{p}{\rightarrow}$ is shorthand for ``converges in probability.'' We next show that $X^n\overset{p}{\rightarrow}\mathbf{0}$ as $n\rightarrow\infty$. This amounts to showing that for all $0<\ep<1$,
	\begin{align*}
	\lim_{n\rightarrow\infty}P\Big(\int_{0}^{\infty}\!\!e^{-t}\left[d_t\left(X^n, 0\right)\wedge 1\right]\,dt\,>\,\ep\Big)\,=\,0;
	\end{align*}
	see, e.g., \citet[Chapter 3, Section 3]{Whitt_2002}.
	But observe that 
	\begin{align}
	&\!\!\!\!\!\!P\Big(\int_{0}^{\infty}\!\!e^{-t}\big[\inf_{\lambda\in \Lambda_t}\left\{\|X^n\,\circ\,\lambda\|_t\vee \|\lambda -e\|_t\right\}\wedge 1\big]\,dt\,>\,\ep\Big)\notag\\
	&\le\, P\Big(\int_{0}^{\infty}\!\!e^{-t}\left[\|X^n\|_t\wedge 1\right]\,dt\,>\,\ep\Big)\notag\\
	&\le\, P\Big(\int_{0}^{T}\!\!e^{-t}\|X^n\|_t\,dt \,+ \int_{T}^{\infty}\!\!e^{-t}\,dt\,>\,\ep\Big),\label{eq5.4}
	\end{align}
	for all $T>0$. Now fix $T>0$ to be such that $\int_{T}^{\infty}e^{-t}\,dt=\ep/2$. It then follows from (\ref{eq5.4}) that
	\begin{align}
	&P\Big(\int_{0}^{T}\!\!e^{-t}\|X^n\|_t\,dt \,+ \int_{T}^{\infty}\!\!e^{-t}\,dt\,>\,\ep\Big)\nonumber\\&\quad=\, P\Big(\int_{0}^{T}\!\!e^{-t}\|X^n\|_t\,dt\,>\,\frac{\ep}{2}\Big)\nonumber\\&\quad\le\, P\Big(\|X^n\|_T\,>\,\frac{\ep}{2}\big(1-\frac{\ep}{2}\big)^{-1}\Big).\label{eq:blah}
	\end{align}
	Since $\|X^n\|_T\overset{p}{\rightarrow}0$ as $n\rightarrow\infty$, it follows from (\ref{eq:blah}) that $X^n\overset{p}{\rightarrow}\mathbf{0}$ as $n\rightarrow\infty$. Since convergence in probability implies convergence in distribution, it follows that $X^n\Rightarrow \mathbf{0}$ as $n\rightarrow\infty$, which completes the proof.\end{proof}

\end{document}